\documentclass[preprint,12pt]{article}
\usepackage{amsfonts}
\usepackage{bbm}
\usepackage[top=1in, bottom=1in, left=1in, right=1in]{geometry}
\usepackage{amsmath,booktabs,ctable,threeparttable}
\usepackage{amssymb,amsfonts,boxedminipage}
\usepackage{amsthm}
\usepackage{algorithm}
\usepackage{algorithmic}
\usepackage{epsfig,graphicx,picins,picinpar,subfigure, epstopdf}
\usepackage{multirow}
\usepackage{verbatim}
\usepackage{bm}
\usepackage[round,numbers,authoryear]{natbib}
\usepackage[colorlinks,
            linkcolor=red,
            anchorcolor=blue,
            citecolor=blue
            ]{hyperref}

\numberwithin{equation}{section}
\newtheorem{theorem}{Theorem}[section]

\newtheorem{lemma}[theorem]{Lemma}
\newtheorem{proposition}[theorem]{Proposition}
\theoremstyle{definition}
\newtheorem{assumption}[theorem]{Assumption}
\newtheorem{definition}[theorem]{Definition}

\newtheorem{remark}[theorem]{Remark}

\makeatletter
 
 \newcommand{\abs}[1]{\left\vert#1\right\vert}

\newcommand{\E}[1]{\mathbb{E}[#1]}

\newcommand{\var}[1]{\mathrm{Var}(#1)}

\newcommand{\Unif}[1]{\mathbb{U}#1}

\newcommand{\ceil}[1]{\lceil #1 \rceil}
\newcommand{\mrd}{\mathrm{d}}
\newcommand{\mbe}{\mathbb{E}}
\newcommand{\mbp}{\mathbb{P}}
\newcommand{\nat}{\mathbb{N}}
\def\BV{\mathrm{HK}}
\makeatother

\begin{document}
\title{Sensitivity estimation of conditional value at risk using randomized quasi-Monte Carlo}
\author{Zhijian He\\South China University of Technology}
\maketitle
\begin{abstract}
Conditional value at risk (CVaR) is a popular measure for quantifying portfolio risk. Sensitivity analysis of CVaR is very useful in risk management and gradient-based optimization algorithms.
In this paper, we study the infinitesimal perturbation analysis estimator for CVaR sensitivity using randomized quasi-Monte Carlo (RQMC) simulation. We first prove that the RQMC-based estimator is strongly consistent under very mild conditions.  Under some technical conditions, RQMC that uses $d$-dimensional points in CVaR sensitivity estimation yields a mean error rate of $O(n^{-1/2-1/(4d-2)+\epsilon})$ for arbitrarily small $\epsilon>0$. The numerical results show that the RQMC method performs better than the Monte Carlo method for all cases. The gain of plain RQMC deteriorates  as the dimension $d$ increases, as predicted by the established theoretical error rate.

\smallskip
\noindent \textbf{Keywords:} Value at risk; Conditional value at risk; Sensitivity; Quasi-Monte Carlo
\end{abstract}

\section{Introduction}\label{se:intro}
In the financial industry, value at risk (VaR) and conditional VaR (CVaR) are two important tools for quantifying and managing portfolio risk. From the view of statistics, VaR is  a quantile of a portfolio's loss (or profit) over a holding period. On the other hand, CVaR is the average of tail losses while VaR only serves as a threshold of large loss. Therefore, CVaR may provide incentives for risk managers to take into account tail risks beyond VaR.  Suppose that the loss is a random function of some parameters. The VaR and CVaR of the loss are therefore functions of the parameters. The partial derivatives of these function are called the sensitivities of VaR and CVaR \citep{hong2009estimating,hong:liu:2009,fu2009conditional,jiang2015estimating}. These sensitivities are useful in risk management and gradient-based optimization algorithms. Moreover, \cite{asim:2019} pointed out that sensitivity analysis and capital allocation problems both boil down to similar mathematical formulations. The Euler allocation rule of the total
regulatory capital set via CVaR is a special case of CVaR sensitivities.
In this paper, we focus on sensitivity analysis of CVaR using simulation.  

Sensitivity estimation has been studied extensively in financial engineering. It includes the sensitivities of option prices, which are known as Greeks \cite[see][]{broa:glass:1996}. Additionally, \cite{hong2014estimating} considered the problem of estimating the  sensitivities of portfolio credit risk.  There are several widely used approaches in the simulation literature. The finite difference (FD) method is the simplest one, but it suffers from the trade-off between the bias and variance of the estimator \cite[see][]{fox:glynn:1989}. The infinitesimal perturbation analysis (IPA) takes the pathwise derivatives of the performance function in the estimation, which is also known as the pathwise method \cite[see][]{glas:2004}. From another perspective, the  performance function is viewed as a parameter-free function of some random variables whose joint distribution depends on the parameter. The likelihood ratio (LR) method takes the derivatives of the joint density in the estimation.
Both IPA and LR methods enjoy the usual Monte Carlo variance rate $O(1/n)$, which is faster than the FD method. Unlike LR, IPA needs stronger conditions that rule out discontinuous functions. However, the IPA estimator usually has a smaller variance than the LR estimator when they are both applicable \citep{cui2019variance}. Recently, \cite{peng2018new} proposed a generalized LR method that extends IPA and LR to handle discontinuous functions.

Related work on sensitivity analysis of CVaR includes \cite{scail:2004} and \cite{hong:liu:2009}. Particularly, \cite{hong:liu:2009} proposed an IPA type CVaR sensitivity estimator. Under certain conditions,  the IPA method is  applicable because the performance function of CVaR is a  hockey stick function. In addition,  \cite{hong:liu:2009} established a central limit theorem for the proposed estimator. It should be noted that the IPA estimator of CVaR sensitivity is biased since it is a sample average of dependent observations that includes the VaR estimator, differently from the IPA estimators developed in Greeks estimation. In this paper, we analyze the IPA estimator proposed by \cite{hong:liu:2009} in the framework of randomized quasi-Monte Carlo (RQMC).
 RQMC is a randomized version of quasi-Monte Carlo (QMC). The (R)QMC  method which has the potential to accelerate the
convergence becomes an alternative method in simulation. It is widely used in financial engineering, such as option pricing and Greeks estimation \citep[see, e.g.,][]{joy1996quasi,wang2013pricing,xie2019importance}. Recently, \cite{he_convergence_2017} established a deterministic error bound for the QMC-based quantile estimator. They also showed that under certain conditions the RQMC-based CVaR estimator has a root mean squared error (RMSE) of $O(n^{-1/2-1/(4d-2)+\epsilon})$ for arbitrarily small $\epsilon>0$, where $d$ is the dimension of RQMC points used in the simulation. To the best of our knowledge, very few works are concerned with sensitivity estimation of CVaR in (R)QMC. The results in \cite{he_convergence_2017} cannot be extended to sensitivity estimation. Particularly, the strong consistency for the VaR and CVaR estimators is remain unclear when using RQMC.

There are two major difficulties in analyzing the RQMC-based CVaR sensitivity estimator. The first difficulty is that the estimator is an estimated function at an estimated VaR rather than the usual sample average. As a result, the numerical analysis in (R)QMC quadrature cannot be applied directly. The second difficulty is due to the discontinuity and singularity of the performance function of CVaR sensitivity. For this case, the  well-known Koksma-Hlawka inequality for assessing QMC error is useless \citep{nied:1992}.  In this paper, we first establish the strong consistency for the RQMC-based estimator by making use of the recent work of \cite{owen:2020}. As a by-product, the strong consistency for the VaR and CVaR estimators is also proved under very mild conditions.   We then study the convergence rates of the estimator. Under some technical conditions, we find that the RQMC-based estimator yields a mean error rate of $O(n^{-1/2-1/(4d-2)+\epsilon})$ for arbitrarily small $\epsilon>0$. The technical conditions are verified for portfolio loss of geometric Brownian motions driven models or when the loss is a quadratic form of normally distributed variables.   Our contribution is two-fold. 
\begin{itemize}
	\item We give a rigorous error analysis of CVaR sensitivity estimation when using RQMC. It is sometimes straightforward to replace Monte Carlo with RQMC for practical problems in finance. Such a simple replacement often leads to an improvement as frequently observed in the numerical results. Theoretical analysis is expected to better understand the performance of RQMC. 
	\item As a by-product of the error analysis, we find that the efficiency of CVaR sensitivity estimation depends on the RQMC integration of two specific discontinuous integrands. This paves the way to improve the RQMC accuracy in CVaR sensitivity estimation. If some strategies are applied to improve the RQMC integration of the two integrands, one would expect a better performance of CVaR sensitivity estimation. In the (R)QMC literature, there are some promising strategies to handle discontinuities in numerical integration, such as dimension reduction techniques and smoothing methods \citep{wang2013pricing,zhang2019quasi}.
\end{itemize}

The remainder of this paper is organized as follows. In Section~\ref{sec:intro}, we present some background on VaR, CVaR and its sensitivity estimation. In Section~\ref{sec:main}, we focus on analyzing the RQMC-based CVaR sensitivity estimator.
Some important (R)QMC preliminaries are first reviewed. Strong consistency and some stochastic bounds are then established. Section~\ref{sec:examples} gives some numerical examples, including portfolios of European
options modeled to be driven by a geometric Brownian motion  and a quadratic loss model arising from the delta-gamma approximation of portfolio value change. Section~\ref{sec:concl} concludes this paper. A technical lemma and its proof are deferred to the Appendix.

\section{Background and Simulation-based Estimation}\label{sec:intro}
Let $L$ be the random loss and $F_L(y)=\mbp(L\le y)$ be the cumulative distribution function (CDF) of $L$. For any $\alpha\in (0,1)$, we define, respectively, the $\alpha$-VaR and  the $\alpha$-CVaR of $L$ as 
\begin{align*}
v_\alpha &= F_L^{-1}(\alpha)=\inf\{y\in \mathbb{R}:F_L(y)\ge \alpha\},\\
c_\alpha &=\frac 1{1-\alpha}\int_{\alpha}^{1}v_\beta \mrd \beta=v_\alpha+\frac 1{1-\alpha}\E{(L-v_\alpha)^+},
\end{align*}
where $(a)^+=\max(a,0)$. The $\alpha$-VaR is the lower $\alpha$-quantile of the distribution of $L$. If $L$ has a density in a neighborhood of $v_\alpha$, then $c_\alpha=\mbe[L|L\ge v_\alpha]$. From this point of view, CVaR is the expected shortfall or the tail conditional expectation.

Suppose that $L_1,\dots,L_n$ are $n$ observations of $L$. Monte Carlo sampling renders independent and identically distributed (iid) observations. In the QMC sampling, the observations are deterministic, but with better uniformness.
In the RQMC framework, the observations are dependent random variables retaining the better uniformness (see Section~\ref{sec:qmc} for the details). In either case, based on the $n$ observations, one can estimate the CDF $F_L(y)$ by the so-called empirical CDF
\begin{equation}\label{eq:ecdf}
\hat{F}_n(y)=\frac{1}{n}\sum_{i=1}^{n}\bm{1}\{L_i\le y\}.
\end{equation}
It is natural to estimate the $\alpha$-VaR and  the $\alpha$-CVaR by
\begin{align}
\hat{v}_{\alpha,n} &= \hat F_n^{-1}(\alpha)=\inf\{y\in \mathbb{R}:\hat F_n(y)\ge \alpha\},\label{eq:varest}\\
\hat{c}_{\alpha,n} &=\hat{v}_{\alpha,n}+\frac 1{n(1-\alpha)}\sum_{i=1}^{n}(L_i-\hat{v}_{\alpha,n})^+,\label{eq:cvarest}
\end{align}
respectively. The crucial step is to  estimate the CDF $F_L(y)$ via \eqref{eq:ecdf}. We expect that a better estimate of the CDF leads to an improved estimate of the quantile. 
Recently, \cite{kaplan:2019} compared two approaches for quantile estimation via RQMC.

Suppose that the random loss can be modeled as a function $L(\theta)$, where $\theta$ is the parameter of interest with range $\Theta\subset \mathbb{R}$. To emphasize the dependence on the parameter $\theta$, the $\alpha$-VaR and  the $\alpha$-CVaR of $L(\theta)$ are rewritten as $v_\alpha(\theta)$ and $c_\alpha(\theta)$, respectively. In this paper, we are interested in the sensitivity estimation of CVaR  with respect to $\theta$, i.e., $c'_\alpha(\theta)=\mrd c_\alpha(\theta)/\mrd \theta$. Let $L'(\theta) = \mrd L(\theta)/\mrd \theta$. To obtain an IPA estimator of the CVaR sensitivity, we need the following technical assumptions.

\begin{assumption}\label{assum:lips}
	There exists a random variable $K$ with $\E{K}<\infty$ such that $$\abs{L(\theta_2)-L(\theta_1)}\le K\abs{\theta_2-\theta_1}$$ for all $\theta_1,\theta_2\in \Theta$, and $L'(\theta)$ exists with probability 1 (w.p.1) for all $\theta \in \Theta$. 
\end{assumption}

\begin{assumption}\label{assum:diff}
	The VaR function $v_\alpha(\theta)$ is differentiable for any $\theta \in \Theta$.
\end{assumption}

\begin{assumption}\label{assum:zeroprob}
	For any $\theta \in \Theta$, $\mbp [L(\theta)=v_\alpha(\theta)]=0$.
\end{assumption}

Under Assumptions \ref{assum:lips}--\ref{assum:zeroprob}, \cite{hong:liu:2009} showed that by interchanging the expectation and differentiation,
\begin{align*}
c'_\alpha(\theta)&=v'_\alpha(\theta)+\frac{1}{1-\alpha}\mbe\left[\frac{\partial (L(\theta)-v_\alpha(\theta))^+}{\partial\theta}\right]\notag\\
&=v'_\alpha(\theta)+\frac{1}{1-\alpha}\mbe\left[ L'(\theta)\bm 1\{L(\theta)> v_\alpha(\theta)\}-v'_\alpha(\theta)\bm 1\{L(\theta)> v_\alpha(\theta)\}\right]\notag\\
&=\frac 1{1-\alpha}\E{L'(\theta)\bm 1\{L(\theta)> v_\alpha(\theta)\}}.
\end{align*}
To simplify the notation, we let $L$ and $L'$ denote $L(\theta)$ and $L'(\theta)$, respectively.  
Suppose that we are able to obtain $n$ observations of $(L,L')$, denoted by $(L_1,L'_1),\dots,(L_n,L'_n)$. \cite{hong:liu:2009}  proposed an IPA estimate of $c'_\alpha(\theta)$ given by
\begin{equation}\label{eq:est}
\hat{\mu}_n=\frac{1}{n(1-\alpha)}\sum_{i=1}^nL'_i\bm 1\{L_i> \hat{v}_{\alpha,n} \},
\end{equation}
where $\hat{v}_{\alpha,n}$ is obtained by \eqref{eq:varest}. The IPA estimator is a sample average of dependent variables that involve the VaR estimator $\hat{v}_{\alpha,n}$. As a result, the CVaR sensitivity estimator \eqref{eq:est} is biased. \cite{hong:liu:2009} proved an asymptotic bias of $o(n^{-1/2})$ in the Monte Carlo setting.

The random loss is often expressed as a function of random variables, say, $L(\theta)=g_\theta(\bm \omega)$ for $\bm \omega=(\omega_1,\dots,\omega_s)^\top $. The random variables $\omega_i$ are  called the risk factors or scenarios. In this paper, we assume that the closed forms of $g_\theta$ and $\partial g_\theta/\partial \theta$ are available so that one can generate the sample $(L_i,L'_i)$. If the random loss is modeled as a conditional expectation $L(\theta)=\mbe[Z|\bm \omega]$ which is not given analytically, one may resort to nested simulation. This is beyond the scope of this paper. Interested readers are referred to 
\cite{gordy_nested_2010,broadie_efficient_2011}. 

\section{RQMC-based Estimation of CVaR Sensitivity}\label{sec:main}

The incorporation of QMC or RQMC
in  estimating the CVaR sensitivity $c'_\alpha(\theta)$ is straightforward provided that the mechanism of sampling the random loss $L$ via standard uniform distributed variables is specified. In what follows, we suppose that $L$ can be expressed as
\begin{equation}\label{eq:model}
L=g_\theta(\bm u) = g_\theta(u_1,\dots,u_d),
\end{equation}
where $g_\theta:(0,1)^d\to \mathbb{R}$ is a given measurable function, and $\bm u=(u_1,\dots,u_d)^\top\sim\Unif{(0,1)^d}$. The model \eqref{eq:model} was also studied in VaR estimation with Latin hypercube sampling \citep{avramidis_correlation-induction_1998,dong_quantile_2017}. It is allowed that $g_\theta(\bm u)$ has singularities along the boundary of the unit cube.   That is why we do not consider the closed set $[0,1]^d$ or the half closed set $[0,1)^d$. As we can see from the numerical examples in Section~\ref{sec:examples}, the random loss is unbounded with singularities along the boundary of the unit cube.  Let $g'_\theta(\bm u)=\partial g_\theta(\bm u)/\partial \theta$. Instead of generating an iid sample in the Monte Carlo framework, we now generate the sample
\begin{equation*}
L_i =  g_\theta(\bm u_i),\ L'_i = g'_\theta(\bm u_i),\ i=1,\dots,n,
\end{equation*}
via RQMC points $\bm u_i$ in $(0,1)^d$. It then follows by substituting the sample in \eqref{eq:ecdf} and \eqref{eq:est} to obtain the associated CVaR sensitivity estimator.  There are various QMC points designed with high uniformness in the literature, which are known as low discrepancy points. A better performance of RQMC can be therefore expected. The price for switching from Monte Carlo to (R)QMC is to specify the model as the form
\eqref{eq:model}.

\subsection{QMC and RQMC theory}\label{sec:qmc}

In this subsection, we review the philosophy of the QMC world and some important QMC integration error analysis in the literature. Let's start by estimating an integral over the unit cube 

\begin{equation*} 
I(f) = \mbe[f(\bm{u})]= \int_{(0,1)^d}f(\bm{x})\mrd \bm{x}.
\end{equation*}
The QMC quadrature rule takes the average
\begin{equation}\label{eq:deterestimate}
\hat{I}_n(f)= \frac 1 n \sum_{i=1}^n f(\bm{u}_i),
\end{equation}
where $\bm{u}_1,\dots,\bm{u}_n$ are carefully chosen points in $(0,1)^d$.  The well-known Koksma-Hlawka inequality gives a deterministic error bound for the quadrature rule \eqref{eq:deterestimate},
\begin{equation}\label{K-H}
\abs{\hat{I}_n(f)-I(f)}\leq V_{\BV}(f)D^{*}_n(\mathcal{P}),
\end{equation}
where $\mathcal{P}:=\{\bm{u}_1,\dots,\bm{u}_n\}$, $V_{\BV}(f)$ is the variation of $f(\bm{u})$ in the sense of Hardy and Krause, and $D^{*}_n(\mathcal{P})$ is the star-discrepancy of points in $\mathcal{P}$; see \cite{nied:1992} for details. There are many ways to construct low discrepancy point sets such that $D^{*}_n(\mathcal{P})=O(n^{-1}(\log n)^d)$.  By \eqref{K-H}, the QMC error is of $O(n^{-1}(\log n)^d)$ for integrands with bounded variation in the sense of Hardy and Krause (BVHK). However, if the integrand $f$ is discontinuous or unbounded, the variation is usually unbounded \citep{owen:2005}. For this case, the Koksma-Hlawka inequality is useless. In this paper, we restrict our attention to $(t,m,d)$-nets in base $b\geq 2$ for which the sample size has the form $n=b^m$. Our results also work for
$(t,d)$-sequences without that restriction on $n$. For $p\ge 1$, the space $L^p((0,1)^d)$ consists of all measurable function $f$ on $(0,1)^d$ for which $\int_{(0,1)^d} f(\bm u)^p \mrd \bm u  <\infty$.

\begin{definition}\label{defn1}
	An elementary interval in base $b$ is a subset of $[0,1)^d$ of the form
	\begin{equation*}\label{eq:EI}
	E= \prod_{j=1}^d\bigg[\frac{t_j}{b^{k_j}},\frac{t_j+1}{b^{k_j}}\bigg),
	\end{equation*}
	where $k_j\in \nat $, $t_j\in \nat $ with $t_j<b^{k_j}$ for $j=1,\dots,d$.
\end{definition}

\begin{definition}\label{defnnets}
	Let $t$ and $m$ be nonnegative integers with $t\leq m$. A finite sequence $\bm{u}_1,...,\bm{u}_{b^m} \in [0,1)^d$ is a $(t,m,d)$-net in base $b$ if every elementary interval in base $b$ of volume $b^{t-m}$ contains exactly $b^t$ points of the sequence.
\end{definition}
\begin{definition}
	Let $t$ be a nonnegative integer. An infinite sequence $\bm u_i\in [0,1)^d$ is a $(t,d)$-sequence in base $b$ if for all $k\geq 0$ and $m\geq t$ the finite sequence $\bm{u}_{kb^m+1},...,\bm{u}_{(k+1)b^m}$ is a $(t,m,d)$-net in base $b$.
\end{definition}

For deterministic QMC, it is important to obtain an estimate of the quadrature error $|\hat{I}_n(f)-I(f)|$. But the upper bound in \eqref{K-H} is very hard to compute, and it is restricted to functions of finite variation. Instead, one can randomize the  points $\bm{u}_1,\dots,\bm{u}_n$ and treat the random version of the quadrature $\hat{I}_n(f)$ in \eqref{eq:deterestimate} as an RQMC quadrature rule. That is why we focus on using RQMC.
Usually, the randomized points are uniformly distributed over $(0,1)^d$, and the low discrepancy property of the points is preserved under the randomization (see \cite{lecu:lemi:2005} and Chapter 13 of the monograph \cite{dick:2010}  for a survey of various RQMC methods). In this paper, we focus on the use of scrambling technique proposed by \cite{owen:1995} to randomize  $(t,m,d)$-nets. 

\cite{owen:1995} applied a scrambling scheme on the nets that retains the net property. Let $\bm{u}_i=(u_i^1,\dots,u_i^d)$. We may write the components of $\bm{u}_i$ in their base $b$ expansion $u_i^j = \sum_{k=1}^{\infty} a_{ijk}b^{-k},$
where $a_{ijk}\in\{0,\dots,b-1\}$ for all $i,j,k$. The scrambled version of $\bm{u}_1,\dots,\bm{u}_n$ is a sequence $\tilde{\bm{u}}_1,\dots,\tilde{\bm{u}}_n$ with  $\tilde{\bm{u}}_i=(\tilde{u}_i^1,\dots,\tilde{u}_i^d)$ written as $\tilde{u}_i^j = \sum_{k=1}^{\infty}\tilde{a}_{ijk}b^{-k},$ where $\tilde{a}_{ijk}$ are defined in terms of random permutations of the $a_{ijk}$. The permutation applied to $a_{ijk}$ depends on the values of $a_{ijh}$ for $h=1,\dots,k-1$. Specifically, $\tilde{a}_{ij1} = \pi_j(a_{ij1}),\ \tilde{a}_{ij2} = \pi_{ja_{ij1}}(a_{ij2}),\ \tilde{a}_{ij3} = \pi_{ja_{ij1}a_{ij2}}(a_{ij3})$, and in general
\begin{equation}\label{eq:scrambling}
\tilde{a}_{ijk} = \pi_{ja_{ij1}a_{ij2}\dots  a_{ijk-1} }(a_{ijk}).
\end{equation}
Each permutation $\pi_\bullet$ is uniformly distributed over the $b!$ permutations of $\{0,\dots,b-1\}$, and the permutations are mutually independent. There are some good properties  of scrambled digital nets or sequences, which can be found in \cite{owen:1995,owen:1997b}.

\begin{itemize}
	\item A scrambeled $(t,m,d)$-net and scrambled $(t,d)$-sequence are $(t,m,d)$-net and  $(t,d)$-sequence w.p.1, respectively.
	\item For any point in $(0,1)^d$, the scrambling version of the point is uniformly distributed over  $(0,1)^d$. This implies that the estimate \eqref{eq:deterestimate} is unbiased if using the scrambling method to randomize the QMC points.
	\item If $\bm u_i$ in \eqref{eq:deterestimate} are points of  a scrambled  $(t,m,d)$-net, then for any squared integrable integrand $f$, $\var{\hat I_n(f)}=o(1/n)$. This suggests that the RQMC-based estimate is asymptotically faster than Monte Carlo estimates for a large class of integrands.
\end{itemize}

The results on scrambled nets are highly dependent on the smoothness properties of
the integrand. If the integrand is sufficiently smooth, the scrambled net variance is improved to $O(n^{-3}(\log n)^{d-1})$; see \cite{owen_scrambled_1997, owen_local_2008} for details. On the other hand, if the integrand is discontinuous,  the scrambled net variance turns out to be $O(n^{-1-1/(2d-1)+\epsilon})$ for arbitrarily small $\epsilon>0$ \citep{he:wang:2015,he:2018}. We next encapsulate their results as a proposition that will be used in the following error analysis.

\begin{proposition}\label{prop:he}
	Let $f(\bm u)=g(\bm u)\bm 1\{\bm u\in \Omega\}$, where $\Omega\subset(0,1)^d$ and $g\in L^2((0,1)^d)$. Suppose that $\hat{I}_n(f)$ given by \eqref{eq:deterestimate} is an RQMC quadrature rule using a scrambled  $(t,m,d)$-net in base $b\ge 2$ with $n=b^m$. Assume that the boundary of the set $\Omega$ admits a $(d-1)$-dimensional Minkowski content $\mathcal{M}(\partial\Omega)$, defined by 
	\begin{equation}\label{eq:minkowski}
	\mathcal{M}(\partial\Omega):=\lim_{\epsilon\to 0} \frac{\lambda_d((\partial \Omega)_\epsilon)}{2\epsilon}	<\infty,
	\end{equation}
	where $\lambda_d(\cdot)$ is the $d$-dimensional Lebesgue measure, and $(A)_\epsilon$ denotes the outer parallel body of $A$ at distance $\epsilon$. 
	\begin{itemize}
		\item If $g$ is constant, then $\var{\hat{I}_n(f)}=O(n^{-1-1/d})$. 
		\item If $g$ is of BVHK, then $\var{\hat{I}_n(f)}=O(n^{-1-1/(2d-1)+\epsilon})$ for arbitrarily small $\epsilon>0$. 
		\item If $g$ satisfies the boundary growth condition with arbitrarily small rates (see Definition~\ref{defn:grow}), then $\mbe[|\hat{I}_n(f)-I(f)|]=O(n^{-1/2-1/(4d-2)+\epsilon})$  for arbitrarily small $\epsilon>0$.
	\end{itemize}

\end{proposition}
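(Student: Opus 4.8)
The three bounds are, respectively, the scrambled-net results of \cite{he:wang:2015} (the first two) and \cite{he:2018} (the third), and the plan is to reproduce their arguments in the unified form stated here. The common engine is Owen's multiresolution (Haar-type) variance formula for a scrambled $(t,m,d)$-net \citep{owen:1997b}, which writes $\var{\hat{I}_n(f)}$ as a sum of squared local coefficients of $f$ over base-$b$ dyadic cells, with gain factors bounded uniformly in terms of $t$ and $b$. Because $f=g\,\bm 1\{\bm u\in\Omega\}$ equals $g$ (respectively $0$) on any cell lying entirely inside (respectively outside) $\Omega$, the only cells whose coefficients feel the discontinuity of $f$ are those straddling $\partial\Omega$. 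The key geometric input is the hypothesis that $\partial\Omega$ has finite $(d-1)$-dimensional Minkowski content \eqref{eq:minkowski}: this forces the number of cubical cells of side $b^{-\ell}$ meeting $\partial\Omega$ to be $O(b^{\ell(d-1)})$, so that at the resolution matching the sample size ($b^{\ell d}\approx n$) there are only $O(n^{(d-1)/d})$ boundary cells.

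For the first case ($g$ constant) I would discard all interior and exterior cells, on which the indicator is exactly integrated, and bound the variance by the boundary-cell contributions alone. Since the scrambled net stratifies the cube at scale $1/n$, each boundary cell of volume $\approx 1/n$ contributes a stratification-type term of order $n^{-2}$, so summing over the $O(n^{(d-1)/d})$ boundary cells gives $\var{\hat{I}_n(f)}=O(n^{-2}\cdot n^{(d-1)/d})=O(n^{-1-1/d})$. For the second case ($g$ of BVHK) the interior and exterior cells no longer vanish and the variation of $g$ enters. Here I would split $f$ into a piecewise-constant approximation on cells of a chosen resolution $\ell$ plus a remainder, control the remainder through the Hardy--Krause variation of $g$ together with the equidistribution of the net, and balance this decreasing error against the increasing $O(b^{\ell(d-1)})$ boundary-cell count by optimizing $\ell$; the resulting trade-off produces the exponent $-1-1/(2d-1)$, with the $\epsilon$ absorbing the logarithmic factors coming from the net discrepancy.

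The third case is the genuinely hard one, and it is precisely why it is stated as a mean (that is, $L^1$) error rather than as a variance: since $g$ is only in $L^2((0,1)^d)$ and may blow up on the faces of the cube, its second moment over the outermost cells need not be controllable cell by cell, so a direct variance bound of the previous type fails. My plan would follow \cite{he:2018}: truncate $g$ at a threshold growing with $n$, treat the truncated part---now bounded and of the BVHK-times-indicator type---by the second-case argument, and control the discarded tail in $L^1$ using the boundary growth condition with arbitrarily small rates (Definition~\ref{defn:grow}) together with the low discrepancy of the points near the boundary of the cube. That growth condition is exactly what keeps the tail contribution down to a cost of an $n^{\epsilon}$ factor, so that taking the square root of the second-case variance rate yields the stated $O(n^{-1/2-1/(4d-2)+\epsilon})$ mean error. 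I expect the main obstacle to be this coupling step: reconciling the multiresolution boundary-cell bound with the unbounded, singular growth of $g$ so that the truncation tail is provably negligible while the Hardy--Krause machinery still applies to the truncated integrand.
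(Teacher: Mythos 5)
Your proposal is correct and follows essentially the same route as the paper, which proves this proposition purely by citation: the first two parts are Theorem 4.4 and Theorem 3.5 of \cite{he:wang:2015} and the third is Corollary 3.5 of \cite{he:2018}, exactly the sources you identify. Your sketch of their internal machinery (Owen's multiresolution variance formula with bounded gain coefficients, the $O(b^{\ell(d-1)})$ count of base-$b$ cells meeting $\partial\Omega$ forced by the Minkowski content, and the truncation of $g$ under the boundary growth condition with the tail controlled in $L^1$) faithfully reproduces the arguments of those cited works, so no gap remains.
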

\begin{proof}
The first and second parts can be found in Theorem 4.4 and Theorem 3.5 of \cite{he:wang:2015}, respectively. The last part is given in Corollary 3.5 of \cite{he:2018}.
\end{proof}

The use of $\epsilon$ in the convergence orders  is to hide the logarithmic factor. As commented in \cite{he:wang:2015}, $\mathcal{M}(\partial \Omega)$  is the surface area of the set $\Omega$ in the terminology of geometry. If $\Omega$ is a convex set in $(0,1)^d$, then its boundary has a $(d-1)$-dimensional Minkowski content. In this case, $\mathcal{M}(\partial \Omega)\leq 2d$ since the surface area of a convex set  in $(0,1)^d$ is bounded by that of  the unit cube, which is $2d$. More generally, \cite{Ambrosio2008} found that if $\Omega$ has a Lipschitz boundary, then $\partial \Omega$ admits a ($d-1$)-dimensional Minkowski content. 

It should be noted that the true value $v_\alpha$ is usually unknown. The CVaR sensitivity estimate \eqref{eq:est} is not the usual quadrature rule  of the form \eqref{eq:deterestimate}. But if we replace the VaR estimate $\hat{v}_{\alpha,n}$ with the true value $v_\alpha$ in \eqref{eq:est}, it turns out to be a quadrature rule for the discontinuous function $g'_\theta(\bm u)\bm 1\{g_\theta(\bm u)> v_\alpha\}/(1-\alpha)$. From this point of view, the results in Proposition~\ref{prop:he} can be used to study the error rate of the CVaR sensitivity estimate. The challenge is how to bound the gap due to the replacement. This is the topic of Section~\ref{sec:bonds}.

\subsection{Strong Consistency}

\cite{hong:liu:2009} established the strong consistency for the Monte Carlo sensitivity estimator. Their proof relies heavily on the strong law of large numbers (SLLN) for an iid sample. Recently, \cite{owen:2020} proved the SLLN for scrambled digital net integration for integrands in $L^{p+1}((0,1)^d)$ for any $p>1$. Together with this fundamental result,  the strong consistency of an RQMC-based CVaR sensitivity estimate can be easily proved following the steps in the proof of \citep[][Theorem 4.1]{hong:liu:2009}. Define
\begin{equation}\label{eq:tildemu}
\tilde{\mu}_n(y) :=\frac{1}{n(1-\alpha)}\sum_{i=1}^nL'_i\bm 1\{L_i> y\},
\end{equation}
where $L'_i=g'_\theta(\bm u_i)$ and $L_i=g_\theta(\bm u_i)$. 

In \cite{he_convergence_2017}, the consistency was proved for the VaR  estimator based on deterministic QMC, but not for its randomized counterpart. We are ready to show the strong consistency of the RQMC-based  estimators. We need the following assumption, which is  the minimal requirement for establishing the strong consistency of the Monte Carlo quantile estimator \citep[][p. 75]{serf:1980}.

\begin{assumption}\label{assum:stand}
	For any $\theta \in \Theta$, $v_\alpha(\theta)$ is the unique solution $x$ of $F(x-)\le p\le F(x)$.
\end{assumption}

\begin{theorem}\label{thm:varconsistency}
If Assumption~\ref{assum:stand} holds and the VaR estimator $\hat{v}_{\alpha,n}$ is based on a scrambled $(t,m,d)$-net in base $b\ge 2$ with $n=b^m$, then 
$$\mbp\left(\lim_{n\to\infty}\hat{v}_{\alpha,n} = v_{\alpha}\right)=1.$$
\end{theorem}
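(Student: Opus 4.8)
The plan is to mimic the classical Monte Carlo argument for sample-quantile consistency (as in \citep[][p.~75]{serf:1980} and the proof of \citep[Theorem 4.1]{hong:liu:2009}), replacing the strong law of large numbers for an i.i.d.\ sample by the SLLN for scrambled digital nets of \cite{owen:2020}. The crucial observation is that Assumption~\ref{assum:stand} does \emph{not} require uniform (Glivenko--Cantelli) convergence of the empirical CDF; pointwise almost-sure convergence at a countable dense family of points, together with the monotonicity of $\hat F_n$ and $F_L$, already pins down the limit of $\hat v_{\alpha,n}$.

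First I would observe that for each fixed $y$ the integrand $\bm 1\{g_\theta(\bm u)\le y\}$ is bounded, hence lies in $L^{p+1}((0,1)^d)$ for every $p>1$. Since $\hat F_n(y)=\frac1n\sum_{i=1}^n\bm 1\{g_\theta(\bm u_i)\le y\}$ is exactly a scrambled-net quadrature of this integrand, the SLLN of \cite{owen:2020} gives $\hat F_n(y)\to F_L(y)$ w.p.1 as $n=b^m\to\infty$. This is the only point at which the RQMC machinery enters, and it is where the ``very mild conditions'' are used: no smoothness or bounded-variation hypothesis is needed because the indicator is integrable to every power.

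Next I would translate Assumption~\ref{assum:stand} into the two-sided strict inequalities $F_L(v_\alpha-\epsilon)<\alpha<F_L(v_\alpha+\epsilon)$ for every $\epsilon>0$: the left inequality is immediate from $v_\alpha=\inf\{y:F_L(y)\ge\alpha\}$, while the right one uses uniqueness to rule out a flat stretch of $F_L$ at level $\alpha$ just above $v_\alpha$. Applying the SLLN at the countable family $y=v_\alpha\pm 1/k$, $k\in\nat$, and intersecting the resulting probability-one events, I obtain a single event of probability $1$ on which $\hat F_n(v_\alpha-1/k)<\alpha$ and $\hat F_n(v_\alpha+1/k)>\alpha$ hold for all large $n$ and every $k$. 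By the definition $\hat v_{\alpha,n}=\inf\{y:\hat F_n(y)\ge\alpha\}$ together with monotonicity, the first inequality forces $\hat v_{\alpha,n}\ge v_\alpha-1/k$ and the second forces $\hat v_{\alpha,n}\le v_\alpha+1/k$ eventually, so $\limsup_n|\hat v_{\alpha,n}-v_\alpha|\le 1/k$; letting $k\to\infty$ yields $\hat v_{\alpha,n}\to v_\alpha$ on this event.

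The main obstacle is conceptual rather than technical: because the samples $L_i=g_\theta(\bm u_i)$ are dependent under RQMC, the i.i.d.\ Glivenko--Cantelli theorem is unavailable, and one must argue that a countable collection of pointwise limits suffices. The remaining care lies in handling strict-versus-nonstrict inequalities at the jump points of $\hat F_n$ and in confirming that Owen's SLLN applies along the net sizes $n=b^m$; once the strict separation $F_L(v_\alpha-\epsilon)<\alpha<F_L(v_\alpha+\epsilon)$ is established, the squeeze is routine.
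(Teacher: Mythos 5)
Your proposal is correct and follows essentially the same route as the paper: invoke the Owen--Rudolf SLLN for scrambled nets to get pointwise almost-sure convergence $\hat F_n(y)\to F_L(y)$, use Assumption~\ref{assum:stand} to obtain the strict separation $F_L(v_\alpha-\epsilon)<\alpha<F_L(v_\alpha+\epsilon)$, and squeeze $\hat v_{\alpha,n}$ via the definition \eqref{eq:varest}, exactly as in the Serfling-style argument the paper adapts. Your explicit countable intersection over $y=v_\alpha\pm 1/k$ merely spells out a step the paper leaves implicit, so it is a refinement of presentation rather than a different proof.
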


\begin{proof}
By the SLLN established in \citep[][Theorem 5]{owen:2020}, for all $x\in\mathbb{R}$,
$$\mbp\left(\lim_{n\to\infty}\hat F_n(x)=F(x)\right)=1.$$
The following steps are in lines with the proof of the strong consistency of the Monte Carlo estimate \citep[][p. 75]{serf:1980}. By Assumption~\ref{assum:stand}, we have $F(v_\alpha-\epsilon)<\alpha<F(v_\alpha+\epsilon)$ for any $\epsilon>0$. Since $\hat F_n(v_\alpha+\epsilon)\to F(v_\alpha+\epsilon)$ and $\hat F_n(v_\alpha-\epsilon)\to F(v_\alpha-\epsilon)$ w.p.1. We thus have 
$$\lim_{n\to \infty} \mbp\left(\cap_{m=n}^\infty \{\hat F_n(v_\alpha-\epsilon)<\alpha<\hat F_n(v_\alpha+\epsilon) \} \right)=1.$$
By \eqref{eq:varest}, we have $\{\hat F_n(v_\alpha-\epsilon)<\alpha<\hat F_n(v_\alpha+\epsilon) \}\subset \{ v_\alpha-\epsilon\le\hat{v}_{\alpha,n}\le v_\alpha+\epsilon \}$. Therefore, 
$$\lim_{n\to \infty} \mbp\left(\cap_{m=n}^\infty \{ |\hat{v}_{\alpha,n}- v_\alpha|\le \epsilon \} \right)=1,$$
implying $\hat{v}_{\alpha,n} \to  v_{\alpha}$ w.p.1.
\end{proof}

\begin{theorem}\label{thm:consistency}
	Suppose that Assumptions~\ref{assum:lips}--\ref{assum:zeroprob} and \ref{assum:stand} are satisfied. The CVaR sensitivity estimator $\hat{\mu}_n$ given by \eqref{eq:est} is based on a scrambled $(t,m,d)$-net in base $b\ge 2$ with $n=b^m$. 
	\begin{itemize}
		\item If the random loss $L=g_\theta(\bm u)\in L^{1+\gamma_1}((0,1)^d)$  for some $\gamma_1>0$, then $\hat{c}_{\alpha,n}\to c_\alpha(\theta)$ w.p.1 as $n\to \infty$.
		\item If $g'_\theta(\bm u)\in L^{1+\gamma_2}((0,1)^d)$  for some $\gamma_2>0$, then $\hat{\mu}_n\to c'_\alpha(\theta)$ w.p.1 as $n\to \infty$.
	\end{itemize}
\end{theorem}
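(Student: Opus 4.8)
The plan is to reproduce the almost-sure argument of \cite{hong:liu:2009} with the iid strong law of large numbers replaced throughout by the scrambled-net SLLN of \cite{owen:2020}. Theorem~\ref{thm:varconsistency} already supplies $\hat v_{\alpha,n}\to v_\alpha$ w.p.1, so in both parts the idea is to pass from the estimator built on the \emph{random} threshold $\hat v_{\alpha,n}$ to the corresponding quadrature built on the \emph{fixed} threshold $v_\alpha$ --- an ordinary average of a single measurable integrand, to which Owen's SLLN applies once I check the requisite integrability --- and then to control the gap produced by this replacement.

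For the first claim I would decompose $\hat c_{\alpha,n}-c_\alpha(\theta)$ into $(\hat v_{\alpha,n}-v_\alpha)$, the replacement gap $\frac1{n(1-\alpha)}\sum_{i=1}^n[(L_i-\hat v_{\alpha,n})^+-(L_i-v_\alpha)^+]$, and the fixed-threshold error $\frac1{n(1-\alpha)}\sum_{i=1}^n(L_i-v_\alpha)^+-\frac1{1-\alpha}\E{(L-v_\alpha)^+}$. The first term vanishes by Theorem~\ref{thm:varconsistency}. Because $0\le(L-v_\alpha)^+\le|L|+|v_\alpha|$ and $L\in L^{1+\gamma_1}$, the integrand $(g_\theta(\bm u)-v_\alpha)^+$ lies in $L^{1+\gamma_1}((0,1)^d)$, so the fixed-threshold error tends to $0$ w.p.1 by Owen's SLLN. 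The replacement gap is the benign case here: since $x\mapsto(a-x)^+$ is $1$-Lipschitz, its modulus is at most $|\hat v_{\alpha,n}-v_\alpha|/(1-\alpha)\to0$. Hence $\hat c_{\alpha,n}\to c_\alpha(\theta)$ w.p.1.

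The second claim is the substantive one, because $y\mapsto\bm{1}\{L_i>y\}$ is discontinuous and the Lipschitz shortcut above is unavailable. Writing $\hat\mu_n=\tilde\mu_n(\hat v_{\alpha,n})$, I would split $\hat\mu_n-c'_\alpha(\theta)=[\tilde\mu_n(\hat v_{\alpha,n})-\tilde\mu_n(v_\alpha)]+[\tilde\mu_n(v_\alpha)-c'_\alpha(\theta)]$. The second bracket is a quadrature of the fixed integrand $g'_\theta(\bm u)\bm{1}\{g_\theta(\bm u)>v_\alpha\}/(1-\alpha)$, dominated in modulus by $|g'_\theta|/(1-\alpha)\in L^{1+\gamma_2}$, so it vanishes w.p.1. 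For the first bracket I would invoke the pointwise bound $|\bm{1}\{L_i>\hat v_{\alpha,n}\}-\bm{1}\{L_i>v_\alpha\}|\le\bm{1}\{|L_i-v_\alpha|\le\delta_n\}$ with $\delta_n:=|\hat v_{\alpha,n}-v_\alpha|$, yielding $|\tilde\mu_n(\hat v_{\alpha,n})-\tilde\mu_n(v_\alpha)|\le\frac1{n(1-\alpha)}\sum_{i=1}^n|L'_i|\bm{1}\{|L_i-v_\alpha|\le\delta_n\}$.

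The crux is then a squeeze in the slack parameter. On the event $\{\delta_n\to0\}$, for each fixed $\delta>0$ we have $\delta_n\le\delta$ eventually, so the right-hand side is ultimately bounded by the quadrature of the fixed integrand $h_\delta(\bm u):=|g'_\theta(\bm u)|\bm{1}\{|g_\theta(\bm u)-v_\alpha|\le\delta\}/(1-\alpha)$, again dominated by $|g'_\theta|/(1-\alpha)\in L^{1+\gamma_2}$; Owen's SLLN gives $\limsup_n|\tilde\mu_n(\hat v_{\alpha,n})-\tilde\mu_n(v_\alpha)|\le\frac1{1-\alpha}\E{|L'|\bm{1}\{|L-v_\alpha|\le\delta\}}$ w.p.1. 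Letting $\delta\downarrow0$ along $\delta=1/k$ and using $|L'|\le K\in L^1$ (Assumption~\ref{assum:lips}), dominated convergence sends the right side to $\frac1{1-\alpha}\E{|L'|\bm{1}\{L=v_\alpha\}}=0$ by Assumption~\ref{assum:zeroprob}. I expect the main obstacle to be exactly this passage --- taming the jump of the indicator via the two-sided slack $\delta$ --- together with the measure-theoretic bookkeeping needed to secure a single probability-one event that simultaneously carries the SLLN for the countable family $\{h_{1/k}\}$, the convergence $\delta_n\to0$, and the fixed-threshold limit, so that the $\limsup$ and the $\delta\downarrow0$ limits may legitimately be taken in turn.
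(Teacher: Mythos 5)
Your proof is correct. The first bullet and the master decomposition $\hat\mu_n-c'_\alpha(\theta)=[\tilde\mu_n(\hat v_{\alpha,n})-\tilde\mu_n(v_\alpha)]+[\tilde\mu_n(v_\alpha)-c'_\alpha(\theta)]$ coincide with the paper's, but your handling of the crux --- the gap term $\tilde\mu_n(\hat v_{\alpha,n})-\tilde\mu_n(v_\alpha)$ --- is genuinely different. The paper applies H\"older's inequality to get $(1-\alpha)\abs{\tilde\mu_n(\hat v_{\alpha,n})-\tilde\mu_n(v_\alpha)}\le D_n^{1/(1+\gamma)}\abs{\hat F_n(\hat v_{\alpha,n})-\hat F_n(v_\alpha)}^{\gamma/(1+\gamma)}$ with $D_n=\frac1n\sum_i\abs{L'_i}^{1+\gamma}$ for some $0<\gamma<\gamma_2$, exploiting the fact that $\bm 1\{x>\hat v_{\alpha,n}\}-\bm 1\{x>v_\alpha\}$ has constant sign so that the average of absolute indicator differences collapses to the empirical-CDF discrepancy $\abs{\hat F_n(\hat v_{\alpha,n})-\hat F_n(v_\alpha)}$; it then squeezes that discrepancy to zero using $\hat F_n(\hat v_{\alpha,n})\ge\alpha$ (definition of the empirical quantile), $\hat F_n(v_\alpha)\to F(v_\alpha)=\alpha$ by the SLLN and Assumption~\ref{assum:zeroprob}, and $\hat F_n(\hat v_{\alpha,n})\le\hat F_n(v_\alpha+\epsilon)$ eventually via Theorem~\ref{thm:varconsistency}. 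You instead dominate the indicator difference pointwise by $\bm 1\{\abs{L_i-v_\alpha}\le\delta\}$ once $\delta_n\le\delta$, apply Owen's SLLN to the countable family $h_{1/k}$, and finish with dominated convergence and Assumption~\ref{assum:zeroprob}; your measure-theoretic bookkeeping (one probability-one event carrying all countably many limits) is exactly what is needed, and the domination $h_\delta\le\abs{g'_\theta}/(1-\alpha)\in L^{1+\gamma_2}$ keeps every SLLN application legitimate. What each buys: your band argument is more elementary --- no H\"older step, no auxiliary moment sequence $D_n$ (hence no need to check $\abs{g'_\theta}^{1+\gamma}\in L^{1+\gamma'}$), and Assumption~\ref{assum:zeroprob} enters transparently at the final dominated-convergence step --- but it is purely qualitative, since $\E{\abs{L'}\bm 1\{\abs{L-v_\alpha}\le\delta\}}$ carries no rate without density control near $v_\alpha$. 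The paper's reduction to $\abs{\hat F_n(\hat v_{\alpha,n})-\hat F_n(v_\alpha)}$ is precisely the quantity later bounded deterministically in \eqref{eq:indv} and recycled in Theorems~\ref{thm:mainboundD} and~\ref{thm:mainunboundD}, so its consistency proof doubles as scaffolding for the subsequent rate analysis.
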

\begin{proof}
	Since $L=g_\theta(\bm u)\in L^{1+\gamma_1}((0,1)^d)$, $L-v_{\alpha}\in L^{1+\gamma_1}((0,1)^d)$. By the SLLN established in \cite{owen:2020}, 
$$\nu_n:=\frac 1{n}\sum_{i=1}^{n}(L_i-v_{\alpha})^+\to\mbe[(L-v_{\alpha})^+]\text{ w.p.1}.$$
By the triangle inequality and Theorem~\ref{thm:varconsistency}, we have
\begin{align*}
\left\lvert\frac 1{n}\sum_{i=1}^{n}(L_i-\hat{v}_{\alpha,n})^+-\mbe[(L-v_{\alpha})^+]\right\rvert&\le \frac 1{n}\sum_{i=1}^{n}\abs{(L_i-\hat{v}_{\alpha,n})^+-(L_i-v_{\alpha})^+}\\&\quad  +\abs{\nu_n-\mbe[(L-v_{\alpha})^+]}\\
&\le \abs{\hat{v}_{\alpha,n}-v_{\alpha}}+\abs{\nu_n-\mbe[(L-v_{\alpha})^+]}\to 0
\end{align*}	
as $n\to \infty$ w.p.1. Together with \eqref{eq:cvarest}, we find that $\hat{c}_{\alpha,n}\to c_\alpha(\theta)$ w.p.1 as $n\to \infty$.

Note that $\tilde{\mu}_n(y)$ given by \eqref{eq:tildemu} is the quadrature rule $\hat I_n(f)$ with $$f(\bm u;y)=\frac{1}{1-\alpha}g'_\theta(\bm u)\bm 1\{g_\theta(\bm u)> y\}\in L^{1+\gamma}((0,1)^d),$$ and $\hat\mu_n=\tilde{\mu}_n(\hat{v}_{\alpha,n})$.  Since $f(\bm u;y)\in L^{1+\gamma}((0,1)^d)$ for some $\gamma>0$, by the SLLN again, 
$$\tilde{\mu}_n(v_{\alpha})\to\E{f(\bm u;v_{\alpha})}=c'_\alpha(\theta)$$ as $n\to \infty$  w.p.1. It suffices to prove that $\tilde{\mu}_n(\hat{v}_{\alpha,n})-\tilde{\mu}_n(v_{\alpha})\to 0$ w.p.1.

Let $D_n=\frac 1 n\sum_{i=1}^n\abs{L'_i}^{1+\gamma}$, where $0<\gamma<\gamma_2$. Since $\abs{g'_\theta(\bm u)}^{1+\gamma}\in L^{1+\gamma'}((0,1)^d)$ for $\gamma'=(\gamma_2-\gamma)/(1+\gamma)>0$, using the SLLN again, $$\mbp(\lim_{n\to\infty}D_n = \E{\abs{g'_\theta(\bm u)}^{1+\gamma}})=1.$$  By the H\"older inequality, we have
\begin{align}
	(1-\alpha)\abs{\tilde{\mu}_n(\hat{v}_{\alpha,n})-\tilde{\mu}_n(v_{\alpha})}&\le D_n^{\frac 1 {1+\gamma}}\left[\frac 1n \sum_{i=1}^n\abs{\bm 1\{L_i> \hat{v}_{\alpha,n}\}-\bm 1\{L_i> v_{\alpha}\}}^{1+\frac 1 \gamma}\right]^{\frac \gamma {1+\gamma}}\notag\\
	&= D_n^{\frac 1 {1+\gamma}}\left[\frac 1n \sum_{i=1}^n\abs{\bm 1\{L_i> \hat{v}_{\alpha,n}\}-\bm 1\{L_i> v_{\alpha}\}}\right]^{\frac \gamma {1+\gamma}}\notag\\
	&=D_n^{\frac 1 {1+\gamma}}\abs{\frac 1n \sum_{i=1}^n(\bm 1\{L_i> \hat{v}_{\alpha,n}\}-\bm 1\{L_i> v_{\alpha}\})}^{\frac \gamma {1+\gamma}}\notag\\
	&=D_n^{\frac 1 {1+\gamma}}\abs{\hat{F}_n(\hat{v}_{\alpha,n})-\hat{F}_n(v_\alpha)}^{\frac \gamma {1+\gamma}},\label{eq:tmu}
\end{align}
where we use the fact that the function $\bm 1\{x>\hat{v}_{\alpha,n}\}-\bm 1\{x>v_{\alpha}\}$ never change the sign when varying $x$.
	
By the definition of $\hat{v}_{\alpha,n}$, we have $\hat{F}_n(\hat{v}_{\alpha,n})\ge \alpha$. Assumption \ref{assum:zeroprob} implies $F(v_\alpha)=\alpha$. Since $\bm 1\{g_\theta(\bm u)\le v_\alpha\}\in L^2((0,1)^d)$, by SLLN, $\hat{F}_n(v_\alpha)\to F(v_\alpha)=\alpha$ w.p.1. As a result,
	$$\liminf_{n\to\infty} \hat{F}_n(\hat{v}_{\alpha,n})-\hat{F}_n(v_\alpha)\ge 0\text{ w.p.1.}$$
	By Theorem~\ref{thm:varconsistency}, $\hat{v}_{\alpha,n}\to v_{\alpha}$ w.p.1. For any $\epsilon>0$, there exists  $N\ge 1$ such that for any $n\ge N$, $\hat{v}_{\alpha,n}<v_\alpha+\epsilon$. So $\hat{F}_n(\hat{v}_{\alpha,n})-\hat{F}_n(v_\alpha)\le \hat{F}_n(v_\alpha+\epsilon)-\hat{F}_n(v_\alpha)\to F(v_\alpha+\epsilon)-F(v_\alpha)=\mbp[v_\alpha<L\le v_\alpha+\epsilon]$ w.p.1. Letting $\epsilon\to 0$, we have $\mbp[v_\alpha<L\le v_\alpha+\epsilon]\to 0$, giving
	$$\limsup_{n\to\infty} \hat{F}_n(\hat{v}_{\alpha,n})-\hat{F}_n(v_\alpha)\le 0\text{ w.p.1.}$$
	This leads to $\lim_{n\to\infty}\hat{F}_n(\hat{v}_{\alpha,n})-\hat{F}_n(v_\alpha)= 0$ w.p.1, completing the proof.	
\end{proof}

\subsection{Stochastic bounds}\label{sec:bonds}

\begin{assumption}\label{assum:crv}
	For any $\theta\in\Theta$, $g_\theta(\bm u)$ defined over $(0,1)^d$ is a continuous random variable whenever $d-1$ components of $\bm u$ are fixed and the remaining one is uniformly distributed over an open interval in $(0,1)$.	
\end{assumption}

\begin{lemma}
	Assume that $\hat{v}_{\alpha,n}$ defined by \eqref{eq:varest} is based on a scrambled $(t,m,d)$-net in base $b\ge 2$ with $n=b^m$. If Assumption~\ref{assum:crv} is satisfied, then for any $n\ge 1$
	\begin{equation}\label{eq:indv}
	\abs{ \hat{F}_n(\hat{v}_{\alpha,n})-\hat{F}_n(v_\alpha)}\le \frac{b^t}{n}+\abs{\hat{F}_n(v_\alpha) -\alpha} \text{ w.p.1}.
	\end{equation}
\end{lemma}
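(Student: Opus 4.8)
The plan is to reduce \eqref{eq:indv} to controlling a single jump of the empirical CDF and then to bound that jump by $b^t/n$ using the net structure together with Assumption~\ref{assum:crv}. I would first split into two cases according to the sign of $\hat v_{\alpha,n}-v_\alpha$. If $\hat v_{\alpha,n}\le v_\alpha$, monotonicity of $\hat F_n$ gives $\hat F_n(\hat v_{\alpha,n})\le \hat F_n(v_\alpha)$, while the definition \eqref{eq:varest} gives $\hat F_n(\hat v_{\alpha,n})\ge \alpha$; combining these yields $\abs{\hat F_n(\hat v_{\alpha,n})-\hat F_n(v_\alpha)}=\hat F_n(v_\alpha)-\hat F_n(\hat v_{\alpha,n})\le \hat F_n(v_\alpha)-\alpha=\abs{\hat F_n(v_\alpha)-\alpha}$, which is even stronger than \eqref{eq:indv}.

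The interesting case is $\hat v_{\alpha,n}> v_\alpha$. Here I would decompose the difference into the jump at the quantile plus a remainder,
\begin{equation*}
\hat F_n(\hat v_{\alpha,n})-\hat F_n(v_\alpha)=\underbrace{\big[\hat F_n(\hat v_{\alpha,n})-\hat F_n(\hat v_{\alpha,n}^-)\big]}_{=:J}+\big[\hat F_n(\hat v_{\alpha,n}^-)-\hat F_n(v_\alpha)\big],
\end{equation*}
and use that by the infimum in \eqref{eq:varest} one has $\hat F_n(\hat v_{\alpha,n}^-)<\alpha$, so the remainder is at most $\alpha-\hat F_n(v_\alpha)=\abs{\hat F_n(v_\alpha)-\alpha}$ (the last equality holding because $\hat F_n(v_\alpha)<\alpha$ in this case, $v_\alpha$ being strictly below the estimated quantile). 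Thus \eqref{eq:indv} follows once I show the jump obeys $J\le b^t/n$.

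The jump $J$ equals $\frac1n$ times the number of sample points lying on the level set $H_c:=\{\bm u\in(0,1)^d:g_\theta(\bm u)=c\}$ with $c=\hat v_{\alpha,n}$, so it suffices to prove that, w.p.1, no level set of $g_\theta$ carries more than $b^t$ of the points $\bm u_1,\dots,\bm u_n$. I would fix any index set $S$ of size $b^t+1$ and show $\mbp(\text{all points indexed by } S \text{ share a common loss})=0$; a union bound over the finitely many such $S$ then gives the claim. Since the scrambled net is a $(t,m,d)$-net w.p.1, the $b^t+1$ points indexed by $S$ cannot all lie in one elementary interval of volume $b^{t-m}$; splitting $(0,1)^d$ into the $b^{m-t}$ slabs obtained by subdividing the first coordinate, at least two of these points, say $\bm u_i$ and $\bm u_j$, fall in distinct slabs and therefore first disagree in a base-$b$ digit of the first coordinate. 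The nested uniform scrambling then makes the tails of that coordinate for $\bm u_i$ and $\bm u_j$ conditionally independent and uniform given their leading digits; conditioning on everything except that tail of $\bm u_i$ freezes its remaining $d-1$ coordinates and freezes $g_\theta(\bm u_j)$, while its first coordinate is uniform on an open sub-interval. Assumption~\ref{assum:crv} then renders $g_\theta(\bm u_i)$ a continuous random variable, so $\mbp(g_\theta(\bm u_i)=g_\theta(\bm u_j))=0$, forcing $\mbp(\text{all of } S \text{ tie})=0$.

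The main obstacle is this last step: verifying rigorously that the scrambling makes one coordinate of $\bm u_i$ conditionally uniform on an open sub-interval with its other coordinates fixed, while $g_\theta(\bm u_j)$ is conditionally independent of it, so that Assumption~\ref{assum:crv} can be invoked to kill the tie. Everything else — the case split, the jump decomposition, and the union bound over index sets — is routine once this conditional-independence-plus-continuity mechanism is established. It is the $(t,m,d)$-net property (points cannot overfill an elementary interval of volume $b^{t-m}$) that makes the threshold $b^t$ appear naturally.
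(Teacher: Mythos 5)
Your proposal is correct, and its probabilistic core is exactly the mechanism of the paper's Appendix Lemma~\ref{lem:append}: two net points whose expansions first disagree at some base-$b$ digit of some coordinate have, after nested uniform scrambling, conditionally independent uniform tails in that coordinate, so Assumption~\ref{assum:crv} makes the two losses conditionally independent continuous random variables and kills the tie. The packaging differs in two places. For the deterministic reduction, the paper avoids your case split and jump decomposition altogether: it writes $\hat v_{\alpha,n}=L_{(\ceil{n\alpha})}$ and sandwiches $\alpha\le \hat F_n(\hat v_{\alpha,n})\le \alpha+S_n/n$, where $S_n$ counts observations tied with $L_{(\ceil{n\alpha})}$; the triangle inequality then yields \eqref{eq:indv} in one line. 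Your route via $\hat F_n(\hat v_{\alpha,n}^-)\le\alpha$ and $\hat F_n(v_\alpha)<\alpha$ when $v_\alpha<\hat v_{\alpha,n}$ is equally valid, just longer. For the combinatorial step, the paper argues in two stages --- at most $b^t$ points of a $(t,m,d)$-net can coincide as points, and any two \emph{distinct} points give distinct losses w.p.1 (pairwise union bound) --- whereas your pigeonhole over the $b^{m-t}$ first-coordinate slabs of volume $b^{t-m}$ handles duplicated and distinct points in one stroke, which is a mild simplification.

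One wrinkle you should repair: you apply the pigeonhole to the \emph{scrambled} net, so the pair $(\bm u_i,\bm u_j)$ you select is random and entangled with the very scrambling randomness you then condition on. Apply it instead to the deterministic underlying net (a $(t,m,d)$-net by hypothesis): this fixes $i$, $j$, and the first-disagreement digit $s\le m-t$ in advance. Since the scrambling applies the identical permutation to both points through digit $s$ (their prefixes agree up to $s-1$) and independent permutations thereafter, the slab separation is preserved and the conditional-independence-plus-uniformity claim holds exactly as in the paper's proof of Lemma~\ref{lem:append}. Note also that the conditional law of the scrambled coordinate is uniform on a half-open base-$b$ interval rather than an open one; this distinction is Lebesgue-null and harmless for invoking Assumption~\ref{assum:crv}.
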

\begin{proof}
Let $L_{(1)},\dots,L_{(n)}$ be the order statistics of $L_1,\dots,L_n$ in increasing order.
By the definition of $\hat{v}_{\alpha,n}$,  we have $\hat{v}_{\alpha,n}=L_{(\ceil{n\alpha})}$, where $\ceil{a}$ denotes the smallest integer no less than $a$. Notice that
\begin{align*}
\hat{F}_n(\hat{v}_{\alpha,n})&=\frac{1}{n}\sum_{i=1}^n \bm 1\{L_{(i)}\le L_{(\ceil{n\alpha})}\}=\frac{\ceil{n\alpha}-1}{n}+\frac{1}{n}\sum_{i=\ceil{n\alpha}}^n \bm 1\{L_{(i)}= L_{(\ceil{n\alpha})}\}.
\end{align*}
Therefore, 
$$\alpha\le \frac{\ceil{n\alpha}}{n}\le \hat{F}_n(\hat{v}_{\alpha,n})\le \alpha+\frac{1}{n}\sum_{i=1}^n \bm 1\{L_{(i)}=L_{(\ceil{n\alpha})}\}.$$
Let $S_n=\sum_{i=1}^n \bm 1\{L_{(i)}=L_{(\ceil{n\alpha})}\}$. It then follows 
$$\abs{ \hat{F}_n(\hat{v}_{\alpha,n})-\hat{F}_n(v_\alpha)}\le \frac{S_n}{n}+\abs{\hat{F}_n(v_\alpha) -\alpha}.$$

If the loss $L$ is a continuous random variable,  $S_n=1$ w.p.1 for iid observations $L_i$. This is because $\mbp(L_i=L_j)=0$ for all $i\ne j$.	Things become complicated for RQMC sampling since the observations $L_i$ are dependent. Under Assumption~\ref{assum:crv}, Lemma~\ref{lem:append} shows that there are at most $b^t$ of $L_1,\dots,L_n$ with equal value  when using  a scrambled $(t,m,d)$-net w.p.1. This implies that $S_n\le b^t$ w.p.1, completing the proof.
\end{proof}

Note that Sobol' sequences are $(t,d)$-sequences in base $b=2$ with $t$ depending on $d$. For this case, the constant $b^t$ in \eqref{eq:indv} can be reduced to $1$ although the value of $t$ may be much larger than $0$; see Remark~\ref{rem:sobol} for a discussion. 

\begin{remark}\label{rem:piecewise}
	Assumption~\ref{assum:crv} is stronger than Assumption~\ref{assum:zeroprob}. To verify Assumption~\ref{assum:crv}, it turns out to look at a function of one-dimensional variable $v\sim \Unif{(a,b)}$ for any $0\le a<b\le 1$, denoted by $h(v)$. Good smoothness of $h(\cdot)$ does not necessarily render  a continuous random variable.
	For example, $h(x)=e^{-1/(x-0.5)^2}$ for $x>0.5$, and  $0$ otherwise. It is not difficult to see that
	$h\in \mathcal{C}^\infty(\mathbb{R})$, but $\mbp[h(v)=0]=\mbp[0<v\le 0.5]>0$ whenever $a<0.5$. For this case, $h(v)$ is not a continuous random variable. This is due to the absence of strict monotonicity. If the function $h(x)$ is strictly monotonic over $(0,1)$ and $h\in \mathcal{C}^1((0,1))$, then $Y=h(v)$ is a continuous random variable with density 
	\begin{equation}\label{eq:fy}
	f_Y(y)=\frac{1}{(b-a)|h'(h^{-1}(y))|}
	\end{equation}
	for $y$ in the support of $Y$.
	This result can be easily extended to the situation in which $h(x)$ is piecewise strictly monotonic. Suppose that $h\in \mathcal{C}^1((0,1))$ and $h'(x)=0$ has countable solutions on $(0,1)$. It is clear that $h(x)$ is  strictly monotonic on an open interval determined by any two successive solutions, in which $h(v)$ has a density of the form \eqref{eq:fy}. The overall density is piecewise. As a result, $Y=h(v)$ is a continuous random variable.
\end{remark}

\begin{theorem}[Bounded case]\label{thm:mainboundD}
Suppose that Assumptions~\ref{assum:lips}--\ref{assum:diff} and \ref{assum:crv}  are satisfied. The estimator $\hat{\mu}_n$ given by \eqref{eq:est} is based on a scrambled $(t,m,d)$-net in base $b\ge 2$ with $n=b^m$. If $g'_\theta(\bm u)$ is bounded, then
\begin{equation*}\label{eq:msebound}
\E{(\hat{\mu}_n-c'_\alpha(\theta))^2}=o(1/n).
\end{equation*}
Let $\Omega=\{\bm u\in(0,1)^d:g_\theta(\bm u)> v_\alpha\}$. If $g'_\theta(\bm u)$ is of BVHK and $\partial \Omega$ admits a $(d-1)$-dimensional Minkowski content defined by \eqref{eq:minkowski}, then  
\begin{equation}\label{eq:msebound2}
\E{(\hat{\mu}_n-c'_\alpha(\theta))^2}=O(n^{-1-1/(2d-1)+\epsilon})
\end{equation}
for arbitrarily small $\epsilon>0$.
\end{theorem}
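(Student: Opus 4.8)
The plan is to split the error into an RQMC quadrature error evaluated at the \emph{true} VaR and a ``gap'' term arising from replacing $v_\alpha$ by $\hat v_{\alpha,n}$. With $\tilde\mu_n$ as in \eqref{eq:tildemu} and $\hat\mu_n=\tilde\mu_n(\hat v_{\alpha,n})$, I would write
$$\hat\mu_n - c'_\alpha(\theta) = \bigl(\tilde\mu_n(\hat v_{\alpha,n}) - \tilde\mu_n(v_\alpha)\bigr) + \bigl(\tilde\mu_n(v_\alpha) - c'_\alpha(\theta)\bigr),$$
and then control $\E{(\hat\mu_n-c'_\alpha(\theta))^2}$ by twice the sum of the mean squares of the two brackets through $(a+b)^2\le 2a^2+2b^2$.

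The second bracket is precisely the RQMC error $\hat I_n(f)-I(f)$ for the integrand $f(\bm u)=g'_\theta(\bm u)\bm 1\{\bm u\in\Omega\}/(1-\alpha)$, which is unbiased under scrambling, so its mean square equals $\var{\hat I_n(f)}$. In the bounded case $f\in L^2((0,1)^d)$, and the general scrambled-net bound gives $\var{\hat I_n(f)}=o(1/n)$. In the BVHK case $g'_\theta/(1-\alpha)$ is of BVHK and $\partial\Omega$ admits the Minkowski content, so the second bullet of Proposition~\ref{prop:he} yields $\var{\hat I_n(f)}=O(n^{-1-1/(2d-1)+\epsilon})$.

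For the first bracket I would reuse the manipulation from the proof of Theorem~\ref{thm:consistency}. Since $g'_\theta$ is bounded by some $M$ (BVHK functions are bounded as well), and since $\bm 1\{x>\hat v_{\alpha,n}\}-\bm 1\{x>v_\alpha\}$ never changes sign in $x$, I obtain
$$(1-\alpha)\abs{\tilde\mu_n(\hat v_{\alpha,n})-\tilde\mu_n(v_\alpha)} \le M\abs{\hat F_n(\hat v_{\alpha,n})-\hat F_n(v_\alpha)}.$$
The lemma yielding \eqref{eq:indv} then bounds the right-hand side by $M\bigl(b^t/n+\abs{\hat F_n(v_\alpha)-\alpha}\bigr)$ w.p.1, so after squaring and taking expectations it remains to control $\E{(\hat F_n(v_\alpha)-\alpha)^2}$. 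Because Assumption~\ref{assum:crv} implies Assumption~\ref{assum:zeroprob}, we have $F(v_\alpha)=\alpha$, whence $\hat F_n(v_\alpha)$ is an unbiased RQMC estimate of $\alpha$ for the indicator $\bm 1\{g_\theta(\bm u)\le v_\alpha\}$, and its mean square is $\var{\hat F_n(v_\alpha)}$. In the bounded case this indicator lies in $L^2$ and the $o(1/n)$ bound applies, so the first bracket is $o(1/n)$ and the two rates combine to $o(1/n)$. In the BVHK case the indicator is $\bm 1\{\bm u\in\Omega'\}$ with constant $g\equiv 1$ and $\Omega'=\{g_\theta\le v_\alpha\}$; since $\partial\Omega'=\partial\Omega$ inherits the Minkowski content, the first bullet of Proposition~\ref{prop:he} gives $\var{\hat F_n(v_\alpha)}=O(n^{-1-1/d})$. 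As $1/d\ge 1/(2d-1)$, this is $O(n^{-1-1/(2d-1)+\epsilon})$, while $b^{2t}/n^2=O(n^{-2})$ is negligible, so the first bracket matches the rate of the second, and combining yields \eqref{eq:msebound2}.

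The main obstacle is the first bracket. One has to recognize the sign-definiteness of the indicator difference in order to collapse it into $\hat F_n(\hat v_{\alpha,n})-\hat F_n(v_\alpha)$, then invoke the combinatorial bound \eqref{eq:indv} to strip off the random, data-dependent level $\hat v_{\alpha,n}$, and finally obtain a \emph{sharp} mean-square rate for $\hat F_n(v_\alpha)-\alpha$ by treating it as a constant-integrand RQMC problem over the sublevel set whose boundary coincides with $\partial\Omega$. Getting this last step to produce $O(n^{-1-1/d})$ (rather than merely $o(1/n)$) is what lets the gap term keep pace with the leading quadrature error in the BVHK regime.
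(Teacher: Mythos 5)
Your proposal is correct and follows essentially the same route as the paper's proof: the identical decomposition through $\tilde\mu_n(v_\alpha)$, the sign-definiteness collapse of the indicator difference into $\hat F_n(\hat v_{\alpha,n})-\hat F_n(v_\alpha)$, the bound \eqref{eq:indv}, and Proposition~\ref{prop:he} (first bullet for $\var{\hat F_n(v_\alpha)}=O(n^{-1-1/d})$, second for $\var{\tilde\mu_n(v_\alpha)}$). If anything, you are slightly more careful than the paper, which writes $\E{(a+b)^2}\le \E{a^2}+\E{b^2}$ without the factor $2$ and leaves implicit both that BVHK functions are bounded and that $F(v_\alpha)=\alpha$ follows from Assumption~\ref{assum:crv}; these points, which you make explicit, do not affect the stated rates.
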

\begin{proof}
	Suppose that  $g'_\theta(\bm u)$ is bounded by $M$. Then  by \eqref{eq:indv}, similarly to \eqref{eq:tmu}, we have
	\begin{align}
\abs{\tilde{\mu}_n(\hat{v}_{\alpha,n})-\tilde{\mu}_n(v_{\alpha})}&\le \frac 1 {n(1-\alpha)}\sum_{i=1}^n |g'_\theta(\bm u_i)|\abs{\bm 1\{L_i> \hat{v}_{\alpha,n}\}-\bm 1\{L_i> v_{\alpha}\}}\notag\\
&\le \frac{M}{1-\alpha}\abs{\hat{F}_n(\hat{v}_{\alpha,n})-\hat{F}_n(v_\alpha)}\notag\notag\\&\le \frac{b^t  M}{(1-\alpha)n}+\frac{M}{1-\alpha}\abs{\hat{F}_n(v_\alpha) -\alpha}\text{ w.p.1}.\label{eq:tt}
	\end{align}
As a result, 
\begin{align}
\E{(\hat{\mu}_n-c'_\alpha(\theta))^2}&=\E{(\tilde{\mu}_n(\hat{v}_{\alpha,n})-c'_\alpha(\theta))^2}\notag\\&\le \E{(\tilde{\mu}_n(\hat{v}_{\alpha,n})-\tilde{\mu}_n(v_{\alpha}))^2}+\E{(\tilde{\mu}_n(v_{\alpha})-c'_\alpha(\theta))^2}\notag\\
&\le \frac{2b^{2t}M^2}{n^2(1-\alpha)^2}+\frac{2M^2}{(1-\alpha)^2}\var{\hat{F}_n(v_\alpha)}+\var{\tilde{\mu}_n(v_{\alpha})}.\label{eq:mse}
\end{align}

Notice that $\var{\hat{F}_n(v_\alpha)}=o(1/n)$ because $\bm 1\{g_\theta(\bm u)> v_\alpha\}\in L^2((0,1)^d)$. 
Similarly, thanks to $g'_\theta(\bm u)\bm 1\{g_\theta(\bm u)> v_\alpha\}\in L^2((0,1)^d)$, $\var{\tilde{\mu}_n(v_{\alpha})}=o(1/n),$ giving $\E{(\hat{\mu}_n-c'_\alpha(\theta))^2}=o(1/n)$.

If $\partial \Omega$ admits a $(d-1)$-dimensional Minkowski content, by the first part of Proposition~\ref{prop:he}, $\var{\hat{F}_n(v_\alpha)}=O(n^{-1-1/d})$. If $g'_\theta$ is of BVHK, $\var{\tilde{\mu}_n(v_{\alpha})}=O(n^{-1-1/(2d-1)+\epsilon})$ by using the second part of Proposition~\ref{prop:he}. Using the inequality \eqref{eq:mse} completes the proof.
\end{proof}

Theorem~\ref{thm:mainboundD} requires the boundedness of $g'_\theta(\bm u)$, which may not hold in practice. If $g'_\theta(\bm u)$ is unbounded, the inequality \eqref{eq:tt} does not hold. To get  rid of this, we use a truncated version of $g'_\theta(\bm u)$ so that the inequality \eqref{eq:tt} can be applied. We first introduce  the so-called boundary growth condition for controlling the function $g'_\theta(\bm u)$ around the boundaries of the unit cube. Let $1{:}d=\{1,\dots,d\}$. For a set $v\subseteq 1{:}d$, $\partial^{v}h$ denotes the mixed partial derivative
of $h$ taken once with respect to components with indices in $v$. 
\begin{definition}\label{defn:grow}
	A function $h$ defined on $(0,1)^d$ is said to satisfy the boundary growth condition if
	\begin{equation}\label{eq:grow}
	\abs{\partial^{v}h(\bm u)}\leq B\prod_{i\in v}\min(u_i,1-u_i)^{-A_i-1}\prod_{i\notin v}\min(u_i,1-u_i)^{-A_i}
	\end{equation}
	holds for all $\bm u\in(0,1)^d$, some rates $A_i>0$, some $B<\infty$ and all $v\subseteq 1{:}d$.	
\end{definition}
The boundary growth condition is the second growth condition described in \cite{owen_halton_2006}.
We use a region 
\begin{equation*}
K(\epsilon) = \{\bm u\in[0,1]^d|\prod_{1\le i\le d}\min(u_i,1-u_i)\ge \epsilon\}
\end{equation*}
to avoid the singularities for small $\epsilon>0$. We now define an extension $h_\epsilon$ of $h$ from $K(\epsilon)$ to $[0,1]^d$ such that $h_\epsilon(\bm u)=h(\bm u)$ for $\bm u\in K(\epsilon)$. One can extend the function $h_\epsilon$ to the whole unit cube $[0,1]^d$ with some good properties as in \cite{owen_halton_2006}. That is,
\begin{equation}\label{eq:extg}
h_\epsilon(\bm u) = h(\bm{c})+ \sum_{v\neq \varnothing}\int_{[\bm c^v,\bm{u}^v]}\partial^{v}h(\bm z^v{:}\bm c^{-v})\bm 1\{\bm z^v{:}\bm c^{-v}\in K(\epsilon)\}\mrd \bm z^v,
\end{equation}
where $\bm c=(1/2,\dots,1/2)^\top$, $\bm z^v{:}\bm c^{-v}$ denotes the point $\bm y\in[0,1]^d$ with $y_j=z_j$ for $j\in v$ and $y_j=c_j$ for  $j\notin v$.
Taking $h(\bm u)=g'_\theta(\bm u)$, $h_\epsilon(\bm u)$ serves as an approximation of $g'_\theta(\bm u)$, which is of BVHK \citep{owen_halton_2006} and therefore bounded.

\begin{theorem}[Unbounded case]\label{thm:mainunboundD}
	Suppose that Assumptions~\ref{assum:lips}--\ref{assum:diff} and \ref{assum:crv}  are satisfied. The estimator $\hat{\mu}_n$ given by \eqref{eq:est} is based on a scrambled $(t,m,d)$-net in base $b\ge 2$ with $n=b^m$. Let $\Omega=\{\bm u\in (0,1)^d:g_\theta(\bm u)> v_\alpha\}$. If $g'_\theta(\bm u)$ satisfies the boundary growth condition \eqref{eq:grow} with arbitrarily small rates $A_i>0$ and $\partial \Omega$ admits $(d-1)$-dimensional Minkowski content defined by \eqref{eq:minkowski}, then $$\E{\abs{\hat{\mu}_n-c'_\alpha(\theta)}}=O(n^{-1/2-1/(4d-2)+\epsilon})$$	for arbitrarily small $\epsilon>0$.
\end{theorem}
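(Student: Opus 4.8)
The plan is to mirror the structure of the proof of Theorem~\ref{thm:mainboundD}, writing $\hat{\mu}_n=\tilde{\mu}_n(\hat{v}_{\alpha,n})$ and splitting the error by the triangle inequality as
$$\E{\abs{\hat{\mu}_n-c'_\alpha(\theta)}}\le \E{\abs{\tilde{\mu}_n(\hat{v}_{\alpha,n})-\tilde{\mu}_n(v_\alpha)}}+\E{\abs{\tilde{\mu}_n(v_\alpha)-c'_\alpha(\theta)}}.$$
The second term is a genuine RQMC quadrature error for the integrand $f(\bm u;v_\alpha)=\frac{1}{1-\alpha}g'_\theta(\bm u)\bm 1\{g_\theta(\bm u)>v_\alpha\}=g(\bm u)\bm 1\{\bm u\in\Omega\}$ with $g=g'_\theta/(1-\alpha)$. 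Because $g'_\theta$ obeys the boundary growth condition \eqref{eq:grow} with arbitrarily small rates and $\partial\Omega$ admits a finite Minkowski content, the third bullet of Proposition~\ref{prop:he} applies verbatim and yields $\E{\abs{\tilde{\mu}_n(v_\alpha)-c'_\alpha(\theta)}}=O(n^{-1/2-1/(4d-2)+\epsilon})$. Hence essentially all the work concentrates on the gap term, for which the bounded-case inequality \eqref{eq:tt} is no longer available.

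To control the gap I would use the bounded extension $h_\epsilon$ of $g'_\theta$ from \eqref{eq:extg} (taking $h=g'_\theta$) as a truncation device, splitting $g'_\theta=h_\epsilon+(g'_\theta-h_\epsilon)$ inside the sum \eqref{eq:tildemu}. For the $h_\epsilon$-piece, since $h_\epsilon$ is bounded, say by $M_\epsilon$, I can repeat the argument of Theorem~\ref{thm:mainboundD}: factor out $M_\epsilon$, use the sign-monotonicity exploited in \eqref{eq:tmu} to convert the sum of absolute indicator differences into $n\abs{\hat{F}_n(\hat{v}_{\alpha,n})-\hat{F}_n(v_\alpha)}$, and then invoke the lemma behind \eqref{eq:indv}. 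Taking expectations and bounding $\E{\abs{\hat{F}_n(v_\alpha)-\alpha}}\le\sqrt{\var{\hat{F}_n(v_\alpha)}}=O(n^{-1/2-1/(2d)})$ through the first bullet of Proposition~\ref{prop:he} (the integrand $\bm 1\{\bm u\in\Omega\}$ being the constant-$g$ case), this piece contributes $O(M_\epsilon\, n^{-1/2-1/(2d)})$.

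For the remainder $g'_\theta-h_\epsilon$, the crucial fact is that $h_\epsilon=g'_\theta$ on $K(\epsilon)$, so the summands vanish outside a thin neighbourhood of the faces of the cube; bounding the indicator difference crudely by $1$ and taking expectations leaves $\frac{1}{1-\alpha}\int_{(0,1)^d\setminus K(\epsilon)}\abs{g'_\theta-h_\epsilon}\,\mrd\bm u$. Combining the growth bound \eqref{eq:grow} on $g'_\theta$, the bound $\abs{h_\epsilon}\le M_\epsilon$, and the volume estimate $\lambda_d((0,1)^d\setminus K(\epsilon))=O(\epsilon(\log(1/\epsilon))^{d-1})$, this integral is $O(\epsilon^{1-\delta})$ for a $\delta$ that shrinks with the rates $A_i$, while $M_\epsilon=O(\epsilon^{-\delta})$ for a comparable $\delta$. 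The gap is therefore $O(\epsilon^{-\delta}n^{-1/2-1/(2d)})+O(\epsilon^{1-\delta})$, and choosing $\epsilon=\epsilon_n\asymp n^{-1/2-1/(4d-2)}$ balances the two pieces against the target: since $1/(2d)>1/(4d-2)$ there is slack to absorb the $\epsilon^{-\delta}$ blow-up, and every leftover power $n^{\delta}$ is swallowed by $n^{\epsilon}$ once the rates $A_i$ are taken small enough.

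The main obstacle I anticipate is precisely this truncation bookkeeping: one must simultaneously (i) quantify how fast $M_\epsilon$ grows as $\epsilon\to 0$ directly from the extension formula \eqref{eq:extg} and the rates $A_i$, and (ii) show that the boundary-layer integral $\int_{(0,1)^d\setminus K(\epsilon)}\abs{g'_\theta-h_\epsilon}\,\mrd\bm u$ decays like a near-first power of $\epsilon$; only then can the $n$-dependent choice $\epsilon_n$ be tuned so that both contributions stay below $n^{-1/2-1/(4d-2)+\epsilon}$. Establishing a clean growth estimate for $M_\epsilon$ from \eqref{eq:extg} is the step most likely to demand care, since it is exactly what couples the arbitrarily small rates $A_i$ to the room needed to absorb the $\epsilon^{-\delta}$ factor into the $n^{\epsilon}$ slack.
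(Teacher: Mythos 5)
Your proposal is correct and follows essentially the same route as the paper's proof: the same triangle-inequality split into the gap term $\tilde{\mu}_n(\hat{v}_{\alpha,n})-\tilde{\mu}_n(v_{\alpha})$ and the quadrature error $\tilde{\mu}_n(v_{\alpha})-c'_\alpha(\theta)$, the same truncation via the extension $h_\epsilon$ of \eqref{eq:extg} combined with \eqref{eq:tt}, the lemma behind \eqref{eq:indv}, and the first and third bullets of Proposition~\ref{prop:he}. The only cosmetic differences are that the paper simply cites Propositions 3.2 and 3.3 of \cite{he:2018} for the two bounds $M_\epsilon\le C\epsilon^{-\max_i A_i-\eta}$ and $\mbe[\abs{g'_\theta-h_\epsilon}]\le C\epsilon^{1-\max_i A_i-\eta}$ that you propose to re-derive by hand, and that it balances with $\epsilon\propto n^{-1/2-1/(2d)}$ rather than your $\epsilon_n\asymp n^{-1/2-1/(4d-2)}$; both choices keep the gap term below the dominant quadrature-error rate $O(n^{-1/2-1/(4d-2)+\epsilon})$, so your tuning also works.
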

\begin{proof}
	We let $h(\bm u)=g'_\theta(\bm u)$ from now on and work on the extension $h_\epsilon(\bm u)$ defined by \eqref{eq:extg},  where $\epsilon>0$ is to be determined. Let $M_\epsilon=\sup_{\bm u\in[0,1]^d}|h_\epsilon(\bm u)|$. By the triangle inequality and using \eqref{eq:tt} by replacing $g'_\theta(\bm u)$ with $h_\epsilon(\bm u)$, with probability one, 
	\begin{align*}
	\abs{\tilde{\mu}_n(\hat{v}_{\alpha,n})-\tilde{\mu}_n(v_{\alpha})} &= \abs{\frac 1 {n(1-\alpha)}\sum_{i=1}^n h(\bm u_i)(\bm 1\{L_i> \hat{v}_{\alpha,n}\}-\bm 1\{L_i> v_{\alpha}\})}\\
	&\le \abs{\frac 1 {n(1-\alpha)}\sum_{i=1}^n h_\epsilon(\bm u_i)(\bm 1\{L_i> \hat{v}_{\alpha,n}\}-\bm 1\{L_i> v_{\alpha}\})}\\
	&+\abs{\frac 1 {n(1-\alpha)}\sum_{i=1}^n [h(\bm u_i)-h_\epsilon(\bm u_i)](\bm 1\{L_i> \hat{v}_{\alpha,n}\}-\bm 1\{L_i> v_{\alpha}\})}\\
	&\le \frac{b^t  M_\epsilon}{(1-\alpha)n}+\frac{M_\epsilon}{1-\alpha}\abs{\hat{F}_n(v_\alpha) -\alpha}+\frac 1 {n(1-\alpha)}\sum_{i=1}^n \abs{h(\bm u_i)-h_\epsilon(\bm u_i)}.
	\end{align*}
	Taking the expectation, we have
	\begin{equation}\label{eq:soex}
	\mbe[\abs{\tilde{\mu}_n(\hat{v}_{\alpha,n})-\tilde{\mu}_n(v_{\alpha})}]\le \frac{b^t  M_\epsilon}{(1-\alpha)n}+\frac{M_\epsilon}{1-\alpha}\var{\hat{F}_n(v_\alpha)}^{1/2}+\frac{1}{1-\alpha}\mbe[\abs{h(\bm u)-h_\epsilon(\bm u)}],
	\end{equation}
	where we use the fact that each $\bm u_i\sim \Unif{(0,1)^d}$ due  to scrambling.
	
	If $h(\bm u)$ satisfies the boundary growth condition with rates $A_i$, Propositions 3.2 and 3.3 in \cite{he:2018} show that for any $\eta \in(0,1-\max_i A_i)$, there exists $C<\infty$ such that
	\begin{align*}
	\mbe[\abs{h(\bm u)-h_\epsilon(\bm u)}]&\le C \epsilon^{1-\max_i A_i-\eta},\\
	M_\epsilon &\le C \epsilon^{-\max_i A_i-\eta}.
	\end{align*}
	Note that $\var{\hat{F}_n(v_\alpha)}=O(n^{-1-1/d})$ by the first part of Proposition~\ref{prop:he}. Taking $\epsilon \propto n^{-1/2-1/(2d)}$, it then follows from \eqref{eq:soex} that
		$$\mbe[\abs{\tilde{\mu}_n(\hat{v}_{\alpha,n})-\tilde{\mu}_n(v_{\alpha})}]=O(n^{(1-\max_i A_i-\eta)[-1/2-1/(2d)]}).$$
	Since $A_i$ and $\eta$ are arbitrarily small positive numbers, we conclude  $\mbe[\abs{\tilde{\mu}_n(\hat{v}_{\alpha,n})-\tilde{\mu}_n(v_{\alpha})}]=O(n^{-1/2-1/(2d)+\epsilon'})$ for arbitrarily small $\epsilon'>0$. By the last part of Proposition~\ref{prop:he}, we have $\mbe[\abs{\tilde{\mu}_n(v_{\alpha})-c'_\alpha(\theta)}]=O(n^{-1/2-1/(4d-2)+\epsilon'})$. As a result,
	$$\mbe[\abs{\hat\mu_n-c'_\alpha(\theta)}]\le \mbe[\abs{\tilde{\mu}_n(\hat{v}_{\alpha,n})-\tilde{\mu}_n(v_{\alpha})}]+\mbe[\abs{\tilde{\mu}_n(v_{\alpha})-c'_\alpha(\theta)}]=O(n^{-1/2-1/(4d-2)+\epsilon'}).$$
\end{proof}

\begin{remark}
	From the proofs of Theorems~\ref{thm:mainboundD} and \ref{thm:mainunboundD}, the error of the CVaR sensitivity estimator is bounded by the numerical integration errors of the two discontinuous functions: $g'_\theta(\bm u)\bm 1\{g_\theta(\bm u)> v_\alpha\}$ and $\bm 1\{g_\theta(\bm u)\le v_\alpha\}$. We therefore cannot expect a better RQMC error rate than those for RQMC integration with discontinuous integrands. From this point of view, sensitivity estimation of CVaR is rather challenging for RQMC due to the discontinuities and  singularities involved in the two functions. To improve the RQMC efficiency, one should pay more attention to handle the two discontinuous functions.
\end{remark}

\section{Numerical study}\label{sec:examples}

In this section, we examine three cases of CVaR sensitivity estimation. In our numerical experiments on RQMC, we use randomized Sobol' points by the linear scrambling of \cite{mato:1998}, which has been carried out in the generator \verb|scramble| in MATLAB.

\subsection{Single Asset}

Consider a portfolio of a single European put option  that was studied in \cite{broadie_efficient_2011} for testing their adaptive nested simulation. 
The underlying asset follows a
geometric Brownian motion with an initial price of
$S_0=100$. The drift of this process under the real-world
distribution is $\mu = 8\%$. The annualized volatility is $\sigma=20\%$. The
risk-free rate is $r=3\%$. The strike of the put option is
$K = 95$, and the maturity is $T=0.25$ years (i.e., three
months). The risk horizon is $\tau = 1/52$ years (i.e., one
week). 

Denote by $S_\tau $ the underlying asset price at the
risk horizon $\tau$. This price in the real-world is generated
according to
\begin{equation}\label{eq:stau}
S_\tau=S_0\exp\{(\mu-\sigma^2/2)\tau+\sigma\sqrt{\tau}Z\},\ Z=\Phi^{-1}(u),
\end{equation}
where $\Phi$ is the CDF of the standard normal distribution, $u\sim \Unif{(0,1)}$, and hence $Z\sim N(0,1)$. 
Using the Black--Scholes formula \citep{hull:2015}, the value of the put option at time $\tau$ is 

\begin{equation}\label{eq:tauvalue}
v_\tau = \mbe[e^{-r(T-\tau)}(K-S_T)^+| S_\tau]= Ke^{-r(T-\tau)}\Phi(-d_2)-S_\tau\Phi(-d_1),
\end{equation}
where $d_2=d_1-\sigma\sqrt{T-\tau}$, and
$$d_1=\frac{\ln ( S_\tau/K)+(r+\sigma^2/2)(T-\tau)}{\sigma\sqrt{T-\tau}}.$$

The portfolio value loss $L(\theta)= v_0-v_\tau$ can be expressed explicitly as a function of $u$, say, $g_\theta(u)$. Here  $\theta$ is the parameter of interest, such as $S_0$, $\mu$, $r$, $\sigma$ etc. 
For this example, the nested simulation is unnecessary. We now write the initial value $v_0$ as a function of $\theta$, denoted by $v_0(\theta)$. The derivative of $v_0(\theta)$ with respect to $\theta$ (denoted by $v'_0(\theta)$) is known as Greeks \citep[see][Chapter 15]{hull:2015}. From \eqref{eq:tauvalue}, the portfolio value at time $\tau$ can be viewed as a function of the random factor $S_\tau$ and possibly the parameter $\theta$, denoted by $v_\tau(S_\tau,\theta)$. By the chain rule, we have
\begin{align}
L'(\theta) &=g'_\theta(u)= v'_0(\theta) - \frac{\partial v_\tau(S_\tau,\theta)}{\partial S_\tau}\frac{\partial S_\tau}{\partial \theta}-\frac{\partial v_\tau(S_\tau,\theta)}{\partial \theta}\notag\\
&=v'_0(\theta) - [\Phi(d_1)-1]\frac{\partial S_\tau}{\partial \theta}-\frac{\partial v_\tau(S_\tau,\theta)}{\partial \theta},\label{eq:singput}
\end{align}
where we use  the fact that the \verb|delta| of the option at time $\tau$ is $\partial v_\tau(S_\tau,\theta)/\partial S_\tau=\Phi(d_1)-1.$
Assumption~\ref{assum:lips} can be easily verified by taking $\Theta$ as a small neighborhood of the parameter $\theta$ being estimated.

\textbf{Case 1}. Consider $\theta=S_0$. It is easy to see that $\partial S_\tau/\partial \theta = S_\tau/S_0$ and  $\partial v_\tau(S_\tau,\theta)/\partial \theta=0$, and hence $g'_\theta(u)=v'_0(\theta) - [\Phi(d_1)-1]S_\tau/S_0$, which is unbounded. As we will see later, all the assumptions in Theorem~\ref{thm:mainunboundD} are satisfied. The CVaR sensitivity estimate based on RQMC can therefore enjoy a mean error of $O(n^{-1+\epsilon})$ for arbitrarily small $\epsilon>0$.

\textbf{Case 2}. Consider $\theta=r$.  It is easy to see that  $\partial S_\tau/\partial \theta = 0$ and  $\partial v_\tau(S_\tau,\theta)/\partial \theta=-K(T-\tau)e^{-r(T-\tau)}\Phi(-d_2)$. The later is known as the \verb|rho|   of  the option at time $\tau$. So $g'_\theta(u)=v'_0(\theta) +K(T-\tau)e^{-r(T-\tau)}\Phi(-d_2)$, which is bounded. Notice that
\begin{align*}
\frac{\partial g'_\theta(u)}{\partial u}&=-K(T-\tau)e^{-r(T-\tau)}\sqrt{\frac{\tau}{T-\tau}}\frac{\Phi'(-d_2)}{\Phi'(\Phi^{-1}(u))}\\
&=-Ke^{-r(T-\tau)}\sqrt{\tau(T-\tau)}\exp\{[(\Phi^{-1}(u))^2-d_2^2]/2\},
\end{align*}
where $\Phi'(u)$ is the density of the standard normal distribution. Since $d_2 = \sqrt{\tau/(T-\tau)}\Phi^{-1}(u)+C$ for some constant $C$, $\abs{\partial g'_\theta(u)/\partial u}$ goes to infinity as $u\to 0$ or $1$ when $\tau<T-\tau$. It is common that the risk horizon $\tau$ is very small relative to the maturity $T$. So for this case,  $\partial g'_\theta(u)/\partial u$ is unbounded, leading to unbounded variation of $g'_\theta (u)$ in the sense of Hardy and Krause. Therefore, the faster rate \eqref{eq:msebound2} in Theorem~\ref{thm:mainboundD} cannot be applied.  Recall that the mean error can be bounded
by the root mean squared error. Theorem~\ref{thm:mainboundD} directly yields that the mean
error is $o(n^{-1/2})$. This is rather conservative because the problem is only one-dimensional. On the other hand, by applying Theorem~\ref{thm:mainunboundD} which allows unbounded $\partial g'_\theta(u)/\partial u$,  a mean error of $O(n^{-1+\epsilon})$ can be achieved.

We now verify the assumptions in Theorem~\ref{thm:mainunboundD} for various cases of $\theta$ including the two cases above. It is not difficult to see that $\partial g_\theta(u) /\partial u>0$, implying the loss $g_\theta(u)$ is strictly increasing in $u$.
So $\alpha = \mbp(g_\theta(u)\le v_\alpha(\theta))=\mbp(u\le u^*)=u^*$, where $u^*$ is the unique solution to $g_\theta(u)=v_\alpha(\theta)$. The closed-form for $\alpha$-VaR is thus available, i.e., $v_\alpha(\theta) = g_\theta(\alpha)$. It is obvious that Assumptions~\ref{assum:diff} and \ref{assum:zeroprob} and \ref{assum:crv} are satisfied. Note that $\Omega=\{ u\in(0,1):g_\theta(u)> v_\alpha\}=(\alpha,1)$, whose boundary admits Minkowski content \cite{Ambrosio2008}. It remains to show that  $g'_\theta( u)$ satisfies the boundary growth condition \eqref{eq:grow} with an arbitrarily small growth rate. That is, for arbitrarily small $A>0$,
\begin{align*}
\abs{g'_\theta( u)}= O(\min(u,1-u)^{-A}),\ \abs{\frac{\partial g'_\theta(u)}{\partial u}}= O(\min(u,1-u)^{-A-1}).
\end{align*}
To this end, we first introduce some useful upper bounds that were also used in \cite{he:2018,he_error_2019}. Note that 
\begin{equation}\label{eq:lo}
\Phi^{-1}(\epsilon)=-\sqrt{-2\ln(\epsilon)}+o(1),\ \Phi^{-1}(1-\epsilon)=\sqrt{-2\ln(\epsilon)}+o(1)
\end{equation}
as $\epsilon\downarrow 0$ \citep[see][Chapter 3.9]{pete:read:1996}. For any fixed $\gamma\in \mathbb{R}$ and  arbitrarily small $A>0$, 
\begin{equation}\label{eq:invphiu}
\exp\{\gamma \Phi^{-1}(u)\}=O(\min(u,1-u)^{-A}).
\end{equation}
Since $\abs{\ln S_\tau}\le \max\{S_\tau,S_\tau^{-1}\}$, by \eqref{eq:stau} and \eqref{eq:invphiu}, we find that
\begin{equation}\label{eq:gb}
(S_\tau)^\gamma \abs{\ln S_\tau}^\beta = O(\min(u,1-u)^{-A})
\end{equation}
for any fixed $\gamma,\beta\in\mathbb{R}$.
Using \eqref{eq:lo} again, for arbitrarily small $A>0$,
\begin{align}
\frac{\partial \Phi^{-1}(u)}{\partial u}=\frac{1}{\Phi'(\Phi^{-1}(u))}&=\sqrt{2\pi}\exp\{\Phi^{-1}(u)^2/2\}\notag\\
&=\sqrt{2\pi}\exp\left[(\sqrt{-2\ln(u)}+o(1))^2/2\right]\notag\\
&=O(\min(u,1-u)^{-1-A}).\label{eq:deinvphiu}
\end{align}

Note that $\partial S_\tau/\partial \theta=a_0S_\tau$ for a constant $a_0$, and $\partial v_\tau(S_\tau,\theta)/\partial \theta$ can be expressed as the form $a_1\Phi'(-d_2)(a_2\ln S_\tau +a_3)+a_4S_\tau \Phi'(-d_1)(a_5\ln S_\tau +a_6)$ for some constants $a_i,i=1,\dots,6$. Overall, $g'_\theta(u)$ can be expressed as a function of $S_\tau$, i.e.,
\begin{equation}\label{eq:gg}
g'_\theta(u)=v'_0(\theta) - a_0[\Phi(d_1)-1]S_\tau-a_1\Phi'(-d_2)(a_2\ln S_\tau +a_3)-a_4S_\tau \Phi'(-d_1)(a_5\ln S_\tau +a_6).
\end{equation}
Using $\abs{\Phi(\cdot)}\le 1$, $\abs{\Phi'(\cdot)}\le 1/\sqrt{2\pi}$, and \eqref{eq:gb}, we find that $\abs{g'_\theta(u)}=O(\min(u,1-u)^{-A})$.
By \eqref{eq:gg},  we have
\begin{align*}
\frac{\partial g'_\theta(u)}{\partial S_\tau}=&-a_0[\Phi(d_1)-1]-a_0\Phi'(d_1)/(\sigma\sqrt{T-\tau})\\
&+a_1\Phi''(-d_2)(a_2\ln S_\tau +a_3)S_\tau^{-1}/(\sigma\sqrt{T-\tau})-a_1a_2\Phi'(-d_2)S_\tau^{-1}\\
&-a_4\Phi'(-d_1)(a_5\ln S_\tau +a_6)+a_4 \Phi''(-d_1)(a_5\ln S_\tau +a_6)/(\sigma\sqrt{T-\tau})-a_4a_5\Phi'(-d_1).
\end{align*}
By the chain rule, we have
$$\frac{\partial g'_\theta(u)}{\partial u}=\frac{\partial g'_\theta(u)}{\partial S_\tau}\frac{\partial S_\tau}{\partial u}=\frac{\partial g'_\theta(u)}{\partial S_\tau}\frac{S_\tau}{\sigma\sqrt{\tau}\Phi'(\Phi^{-1}(u))}.$$
By using \eqref{eq:gb} and \eqref{eq:deinvphiu}, and thanks to the boundedness of $\Phi''(\cdot)$, $\abs{\partial g'_\theta(u)/\partial u}=O(\min(u,1-u)^{-1-A}).$ The boundary growth condition is thus verified.

To compare the performances of Monte Carlo and RQMC based sensitivity estimators, we need to know the theoretical value of the CVaR sensitivity. Although the VaR has a closed form, it is difficult to compute the sensitivity of CVaR analytically. To get an accurate estimate as the benchmark, we use the 
RQMC-based estimate \eqref{eq:est} by replacing $\hat{v}_{\alpha,n}$ with the theoretical value of VaR $v_\alpha=g_\theta(\alpha)$ with 100 replications, each using a large sample size $n=2^{24}$. We denote this method as RQMC2, and compare it with the crude estimate \eqref{eq:est} for sample sizes $n=2^{10},\dots,2^{20}$. In our numerical experiments, we take $\alpha=0.9$. For Case 1, the benchmark is $c'_\alpha(S_0)=-0.1337$.  For Case 2, the benchmark is $c'_\alpha(r)=-3.8585$. In Figures~\ref{fig:delta1} and \ref{fig:rho1}, we compare both the mean errors and RMSEs for Monte Carlo and RQMC. We observe that the mean error and RMSE for RQMC overlap considerably with a decay rate of nearly $O(1/n)$. RQMC yields a much better error rate of convergence compared to that of Monte Carlo. 
RQMC2 is actually an RQMC quadrature $\hat I_n(f)$ for the discontinuous function $f(u)=g'_\theta(u)\bm 1\{g_\theta(u)> v_\alpha\}/(1-\alpha)$. The mean error of RQMC2 is very close to $O(1/n)$, as predicted by Proposition~\ref{prop:he}.
Comparing RQMC and RQMC2, we find that the performance of the sensitivity estimator \eqref{eq:est} is similar to that of the quadrature rule for the discontinuous function $f(u)$.  Both share an error rate of nearly $O(1/n)$. Although RQMC2 is an unbiased estimate of $c'_\alpha(\theta)$, it needs to know the true value of $v_\alpha(\theta)$ which is impossible for most cases. Without the unbiasedness, the usual estimate \eqref{eq:est} seems to be comparable to the unbiased one.

\begin{figure}[ht]
\caption{CVaR sensitivity of the put option for $\theta =S_0$ (Case 1) and $\alpha=0.9$. All errors are based on 100 replications for $n=2^{10},\dots,2^{20}$. The figure has two reference lines proportional to labeled powers of $n$. Differently from Monte Carlo (MC) and RQMC, RQMC2 uses the estimate \eqref{eq:est} by replacing $\hat{v}_{\alpha,n}$ with the true value of VaR $v_\alpha=0.859$. The benchmark is $c'_\alpha(S_0)=-0.1337$.  \label{fig:delta1}}
\centering
\includegraphics[width=0.8\hsize]{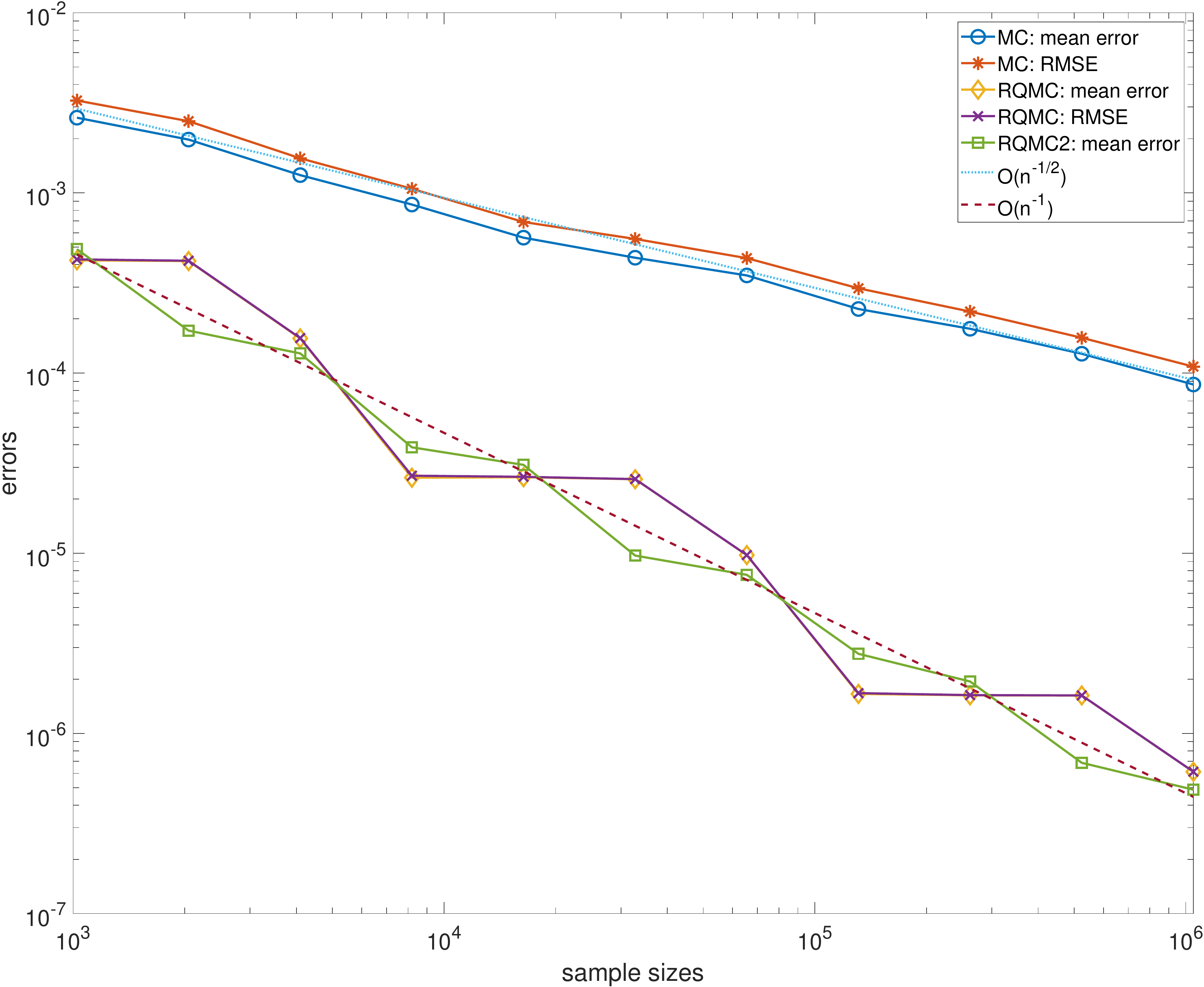}
\end{figure}

\begin{figure}[ht]
	\caption{CVaR sensitivity of the put option for $\theta =r$ (Case 2) and $\alpha=0.9$. All errors are based on 100 replications for $n=2^{10},\dots,2^{20}$. The figure has two reference lines proportional to labeled powers of $n$. Differently from Monte Carlo (MC) and RQMC, RQMC2 uses the estimate \eqref{eq:est} by replacing $\hat{v}_{\alpha,n}$ with the true value of VaR $v_\alpha=0.859$. The benchmark is $c'_\alpha(r)=-3.8585$.  \label{fig:rho1}}
	\centering
	\includegraphics[width=0.8\hsize]{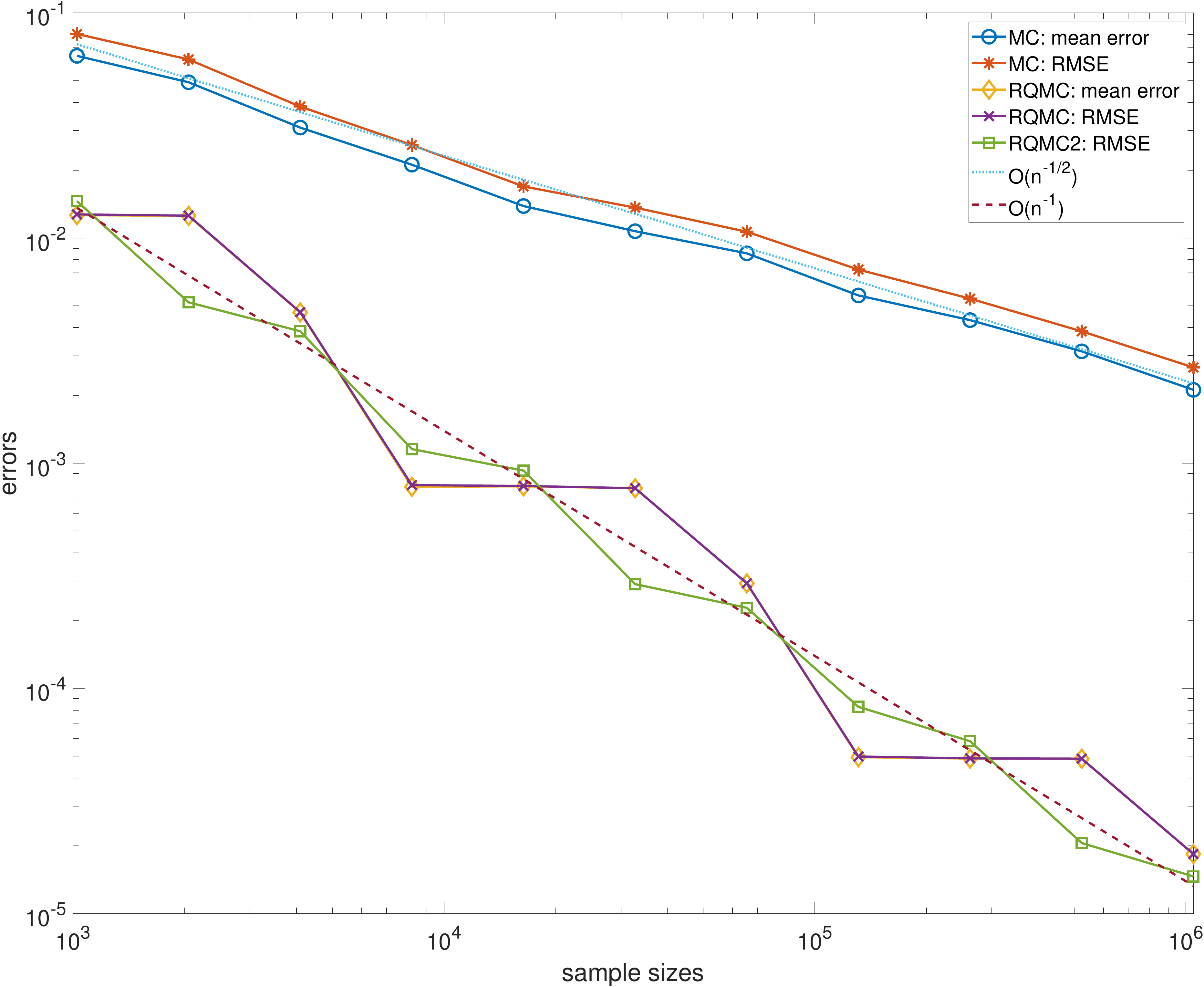}
\end{figure}

The analysis for a call option is similar. In this case, the option price at time $\tau$ is 
\begin{equation}\label{eq:tauvalue2}
v_\tau = \mbe[e^{-r(T-\tau)}(S_T-K)^+| S_\tau]= S_\tau\Phi(d_1)-Ke^{-r(T-\tau)}\Phi(d_2),
\end{equation}
and the derivative $g'_\theta(u)$ becomes
\begin{align}
g'_\theta(u)=v'_0(\theta) - \Phi(d_1)\frac{\partial S_\tau}{\partial \theta}-\frac{\partial v_\tau(S_\tau,\theta)}{\partial \theta}.\label{eq:singcall}
\end{align}
The numerical results are not reported in this paper for saving space.

\subsection{Multiple Assets}\label{sec:assets}
In this subsection, we consider a portfolio of $d$ assets $S_t^i,\ i=1,\dots,d$. The underlying asset prices follow a geometric Brownian motion. At time $\tau$,  the asset prices are 
\begin{equation}\label{eq:assets}
S_\tau^i = S_0^i\exp\{(\mu_i-\sigma_i^2/2)\tau +\sigma_iB_i(\tau)\}, i=1,\dots,d,
\end{equation}
where $\bm B(t)=(B_1(t),\dots,B_d(t))^\top$ is a $d$-dimensional Brownian motion with correlation matrix $\bm\rho=(\rho_{ij})_{i,j=1,\dots,d}$,  $\mu_i$ is the drift  under the real-world distribution for the $i$th asset, and $\sigma_i$ is the associated volatility. It is clear that $\bm B(\tau) \sim N(\bm 0,\tau\bm \rho)$. Let $\bm A$ be a decomposition of $\bm \rho$ satisfying $\bm A\bm A^{\top}=\bm\rho$. Then $\bm B(\tau)$ can be simulated via 
\begin{equation}\label{eq:A}
\bm B(\tau)= \sqrt{\tau}\bm A\bm z=\sqrt{\tau}\bm A\Phi_d^{-1}(\bm u),
\end{equation}
where   $\bm u\sim \Unif{(0,1)^d}$ and $\bm z=\Phi_d^{-1}(\bm u):=(\Phi(u_1),\dots, \Phi(u_d))^\top\sim N(\bm 0,\bm 1_d)$. By doing so, $S_\tau^i$ is a function of $\bm u$ or $\bm z$.  

Suppose that the portfolio is composed of  $J$ European options, where the $j$th option is written on the $k_j$th asset with a maturity $T_j$ and a strike $K_j$, $j=1,\dots,J$. Denote the price of the $j$th option at time $t\le T_j$ by $v^j_t=v^j_t(S_t^{k_j},\theta)$, which can be obtained via  the Black--Scholes  formula  \eqref{eq:tauvalue} or \eqref{eq:tauvalue2}. The portfolio value loss at time $\tau$ is
$$L(\theta)=g_\theta(\bm u)=\sum_{j=1}^J [v^j_0(S_0^{k_j},\theta)- v^j_\tau(S_\tau^{k_j},\theta)]=\sum_{j=1}^J g_{j,\theta}(\bm u),$$
where $g_{j,\theta}(\bm u)=v^j_0(S_0^{k_j},\theta)- v^j_\tau(S_\tau^{k_j},\theta)$ is the loss of the $j$th option. The derivative is then
$$g'_\theta(\bm u)=\sum_{j=1}^J \left[\frac{\partial v^j_0(S_0^{k_j},\theta)}{\partial\theta}- \frac{\partial v^j_\tau(S_\tau^{k_j},\theta)}{\partial \theta}\right]=\sum_{j=1}^J g'_{j,\theta}(\bm u),$$
where $g'_{j,\theta}(\bm u)$ can be obtained by \eqref{eq:singput} or \eqref{eq:singcall} depending on the type of the option. 

If $d-1$  components of $\bm u$ are fixed and the remaining one $u_j\sim\Unif{(a,b)}$, $g_\theta(\bm u)$ is a function of $u_j$ or $z_j=\Phi^{-1}(u_j)$. As a function of $z_j$, by \eqref{eq:tauvalue} and \eqref{eq:tauvalue2}, its derivative $\partial g_\theta(\bm u)/\partial z_j$ can be expressed as a linear combination of terms like $e^{c_1z_j}\Phi(c_2z_j+c_3)$ or $e^{c_4z_j}$, where $c_1,c_2,c_3,c_4$ are some constants. So the equation $\partial g_\theta(\bm u)/\partial z_j=0$ has a finite number of roots for $z_j\in\mathbb{R}$. Since $g_\theta(\bm u)$ is infinitely times differentiable with respect to $z_j$,   $g_\theta(\bm u)$ is piecewise strictly monotonic with respect to $z_j$ (or equivalently $u_j$), verifying Assumption~\ref{assum:zeroprob} (see Remark~\ref{rem:piecewise} for greater details). 

We next focus on the growth condition required in Theorem~\ref{thm:mainunboundD}. The remaining conditions in Theorem~\ref{thm:mainunboundD} can be easily verified. Since $g'_\theta(\bm u)$ is a linear combination of $g'_{j,\theta}(\bm u)$, it suffices to verify the growth condition for $g'_{j,\theta}(\bm u)$. Without loss of generality, assume that the $j$th option is a put option on the first asset, i.e., $k_j=1$. By \eqref{eq:assets} and \eqref{eq:A}, we have 
$$S_\tau^1 = S_\tau^1(\bm u) = S_0^1\exp\{(\mu_1-\sigma_1^2/2)\tau +\sigma_1\sqrt{\tau}\sum_{i=1}^da_{1i}z_i\},\ z_i=\Phi^{-1}(u_i),$$
where $a_{ij}$ are the entries of the matrix $\bm{A}$. From \eqref{eq:singput} and \eqref{eq:singcall}, we find that $g'_{j,\theta}(\bm u)$ can be expressed as a function of $S_\tau^1$, say $h(S_\tau^1(\bm u))$. For any $v\subset 1{:}d$, $\partial^v h(S_\tau^1(\bm u))$ is a linear combination of terms with the form $h^{(d')}(S_\tau^1(\bm u))\prod_{i=1}^{d'} \partial^{v_i} S_\tau^1(\bm u)$, where $d'\le d$,  $v_i\cap v_{i'}=\varnothing$ for any $i\neq i'$, and $\cup_{i=1}^{d'} v_i= v$. Note that
$$\prod_{i=1}^{d'} \partial^{v_i} S_\tau^1(\bm u)= [S_\tau^1(\bm u)]^{d'}\prod_{i\in v}\frac{\sigma_1\sqrt{\tau}a_{1i}}{\Phi'(\Phi^{-1}(u_i))}.$$
By \eqref{eq:invphiu} and \eqref{eq:deinvphiu},
$$\abs{\prod_{i=1}^{d'} \partial^{v_i} S_\tau^1(\bm u)}=O\left(\prod_{i\in v}\min(u_i,1-u_i)^{-A_i-1}\prod_{i\notin v}\min(u_i,1-u_i)^{-A_i}\right)$$
for arbitrarily small $A_i>0$. Taking the derivative of \eqref{eq:gg} $d'$ times with respect to $S_\tau$, we can see that $|h^{(d')}(S_\tau^1(\bm u))|$ is bounded by a linear combination of terms $[S_\tau^1(\bm u)]^\gamma|\ln S_\tau^1(\bm u)|^\beta$ for constants $\gamma$ and $\beta$. This is due to the fact that $\abs{\Phi^{(i)}(\cdot)}$ is bounded for any nonnegative integer $i$. Similar to \eqref{eq:gb}, for any $\gamma,\beta$, and arbitrarily small $A_i>0$, $$[S_\tau^1(\bm u)]^\gamma|\ln S_\tau^1(\bm u)|^\beta=O\left(\prod_{i=1}^d\min(u_i,1-u_i)^{-A_i}\right).$$
The growth condition for $g'_{j,\theta}(\bm u)$ is thus satisfied with arbitrarily small rates $A_i>0$.
The RQMC-based CVaR sensitivity estimate yields a mean error rate of $O(n^{-1/2-1/(4d-2)+\epsilon})$ as confirmed by Theorem~\ref{thm:mainunboundD}.

In the numerical study, we consider the following simple test portfolios:
\begin{itemize}
	\item \textbf{Portfolio A}. One call and one put options on $d=10$ independent underlying assets. Each option has a maturity of $0.25$ and a strike of $95$. Each asset has an initial value of $100$ and a volatility of $20\%$.  The real-world interest rates are $\mu_i=8\%$, and the risk-free interest rate  is $r=3\%$.
	\item \textbf{Portfolio B}. Same as Portfolio A, but with each pair of underlying assets having correlation $0.2$, i.e.,  $\rho_{ij}=0.2$ for any $i\ne j$.  
\end{itemize}
The risk horizon we choose is again $\tau=1/52$ years, and we consider $\theta=r$. For Portfolio A with independent assets, we take the matrix $\bm A=\bm 1_d$ in \eqref{eq:A}. For Portfolio B with correlated assets, we take the principal components construction \citep{glas:2004} for obtaining $\bm A$, that is $\bm A=(\sqrt{\lambda_1}\bm v_1,\dots,\sqrt{\lambda_d}\bm v_d)$ where $\lambda_1\ge \dots\ge \lambda_d$ are the eigenvalues of the correlation matrix $\bm \rho$ and $\bm{v}_1,\dots,\bm v_d$ are the corresponding eigenvectors of unit length. To get an accurate estimate as the benchmark, we run the RQMC method with 100 replications, each using a large sample size $n=2^{22}$. Figures~\ref{fig:rho10} and \ref{fig:rho10depend} show the convergence results of Monte Carlo and RQMC for the two portfolios, respectively. It is clear that RQMC performs better than Monte Carlo. The mean error rate of RQMC diminishes compared to the case of a single option. As predicted by the theoretical rate $O(n^{-1/2-1/(4d-2)+\epsilon})$, RQMC suffers from the curse of dimensionality although the rate is asymptotically better than the Monte Carlo rate $O(n^{-1/2})$. 

Sensitivity estimation of CVaR is more challenging for RQMC due to both discontinuity and high dimensionality. To overcome the impact of high dimensionality, some dimension reduction strategies are proposed in the literature \citep{wang2013pricing,weng:2016}. It is widely believed that  RQMC can be very effective if the effective dimensions of the function is low \citep{caflisch1997valuation}.  The decomposition of  the correlation matrix $\bm \rho=\bm A\bm A^\top$ is not unique, and it has an impact on the effective dimensions of the two important functions $g'_\theta(\bm u)\bm 1\{g_\theta(\bm u)> v_\alpha\}$ and $\bm 1\{g_\theta(\bm u)\le v_\alpha\}$. It therefore leaves room for choosing a proper matrix $\bm A$ in \eqref{eq:A} to reduce the effective dimensions. On the other hand, to overcome the impact of discontinuity, one may resort to some smoothing methods, such as conditioning \citep{zhang2019quasi}. RQMC enjoys a faster rate of convergence if the integrand is sufficiently smooth. \cite{he_error_2019} showed that RQMC together with conditioning achieves a mean error of $O(n^{-1+\epsilon})$   for option pricing problems. We leave these strategies of improving RQMC efficiency for future research. 

\begin{figure}[ht]
	\caption{CVaR sensitivity of Portfolio A (independent assets) for $\theta =r$ and $\alpha=0.9$. All errors are based on 100 replications for $n=2^{10},\dots,2^{20}$. The figure has a reference line $O(n^{-1/2})$. The benchmark is $c'_\alpha(r)=8.0814$.  \label{fig:rho10}}
	\centering
	\includegraphics[width=0.8\hsize]{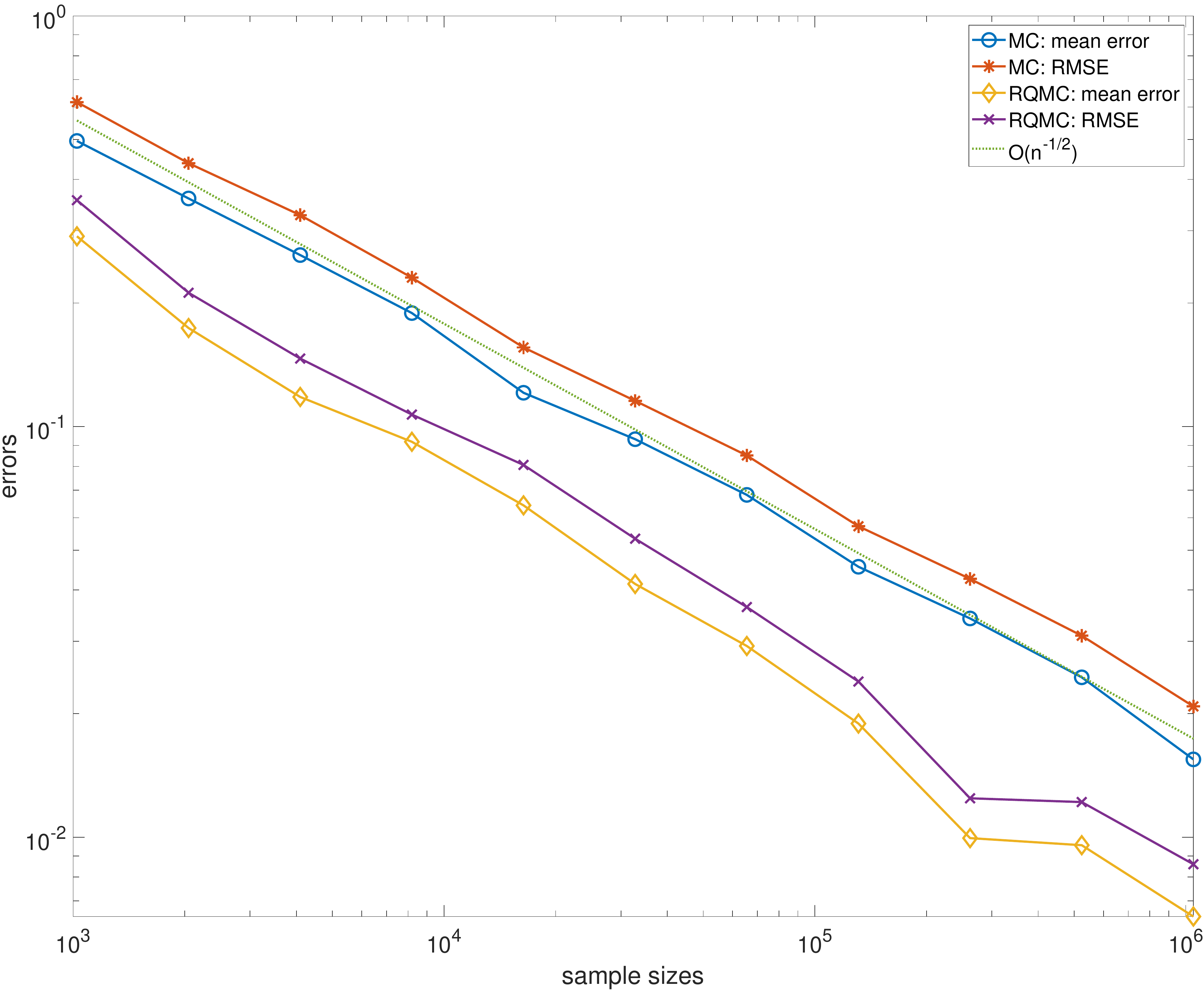}
\end{figure}

\begin{figure}[ht]
	\caption{CVaR sensitivity of Portfolio B (correlated assets) for $\theta =r$ and $\alpha=0.9$. All errors are based on 100 replications for $n=2^{10},\dots,2^{20}$. The figure has a reference line $O(n^{-1/2})$. The benchmark is $c'_\alpha(r)=15.1564$.  \label{fig:rho10depend}}
	\centering
	\includegraphics[width=0.8\hsize]{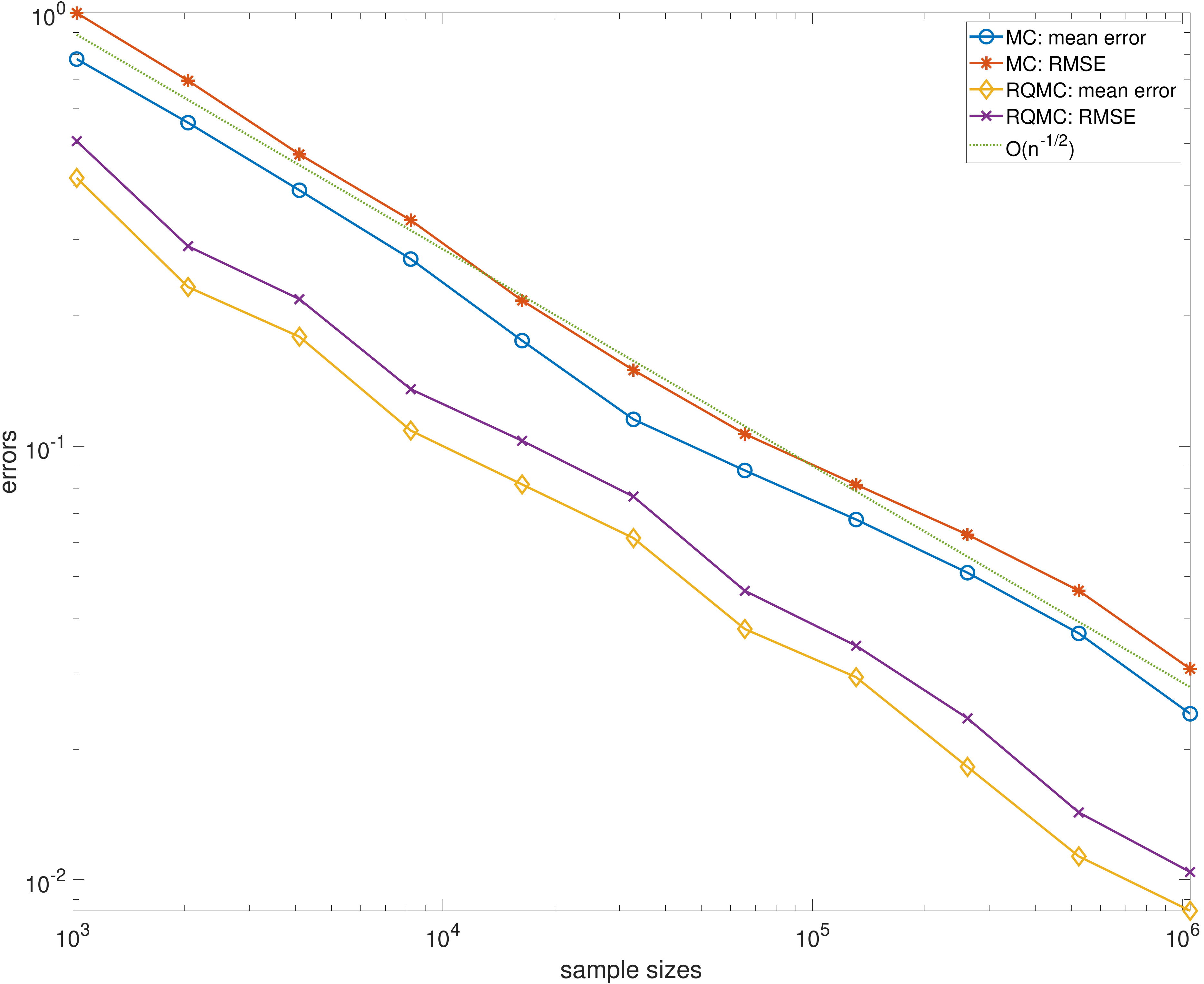}
\end{figure}
\subsection{Delta-Gamma Approximation}
Consider a portfolio of many assets (such as stocks, options) that depend on $d$ risk factors. Let $\Delta S$ denote the changes in the risk factors in a given time period $\tau$. \cite{hong:liu:2009} studied a quadratic model
\begin{equation}\label{eq:quad}
L = a_0+\bm \alpha^\top\Delta S+\Delta S^\top \bm A \Delta S,
\end{equation}
where $a_0\in\mathbb{R}$, $\bm \alpha=(\alpha_1,\dots,\alpha_d)^\top$ and  $\bm A\in \mathbb{R}^{d\times d}$ are known. Assume that $\bm A$ is positive definite, and $\Delta S\sim N(\bm \mu,\bm \Sigma)$ with mean $\bm \mu=(\mu_1,\dots,\mu_d)^\top$ and covariance $\bm \Sigma$ (also  positive definite). The simple model \eqref{eq:quad} is actually the delta-gamma approximation of the loss studied in Section~\ref{sec:assets}. \cite{glasserman_variance_2000} used the approximation to guide the selection of effective variance reduction techniques in estimating VaR. 

The mean and covariance are estimated from historical data. We are interested in estimating CVaR sensitive  to the parameter $\theta =\mu_k$ for some $k\le d$. Let $\bm \Sigma=\bm C\bm C^\top$ be a decomposition of the covariance.
We may write  $$\Delta S=\bm\mu + \bm C\bm z=\bm\mu +\bm C\Phi^{-1}_d(\bm u),$$ where $\bm u\sim \Unif{(0,1)^d}$ and $\bm z=\Phi^{-1}_d(\bm u)\sim N(\bm 0,\bm 1_d)$. It then gives
\begin{equation*}\label{eq:quad2}
L(\theta) = a_0+\bm \alpha^\top\bm \mu+\bm\mu^\top \bm A\bm \mu +(\bm \alpha^\top \bm C+2\bm \mu^\top \bm A\bm C)\bm z+\bm z^\top \bm C^\top\bm A \bm C\bm z.
\end{equation*}
If $d-1$  components of $\bm z$ are fixed, $L$ is a quadratic function of the remaining component, which is piecewise strictly monotonic. Assumption~\ref{assum:zeroprob} is therefore verified.
The derivative is then
\begin{equation*}
L'(\theta)=g'_\theta(\bm u)=\alpha_k+2\bm A_{k\cdot}^\top \bm \mu +2 (\bm A\bm C)_{k\cdot}\Phi^{-1}_d(\bm u),
\end{equation*}
where $\bm A_{k\cdot}$ denotes the $k$th row of the matrix $\bm A$ and similarly for $(\bm A\bm C)_{k\cdot}$. It then follows by \eqref{eq:invphiu} and \eqref{eq:deinvphiu} that the growth condition for $g'_{\theta}(\bm u)$ is satisfied with arbitrarily small rates. 
The RQMC-based CVaR sensitivity estimate yields a mean error rate of $O(n^{-1/2-1/(4d-2)+\epsilon})$ as confirmed by Theorem~\ref{thm:mainunboundD}. The quadratic model \eqref{eq:quad} is much simpler than the model in Section~\ref{sec:assets}. We  do not report the numerical results here.

\section{Conclusion}\label{sec:concl}
In this paper we found convergence rates of RQMC for CVaR sensitivity estimation. 
The theoretical results show that RQMC yields an asymptotically faster error rate than Monte Carlo, but the rate deteriorates  as the dimension $d$ increases. It is important to  note that the results we proved are the worst-case error rates.
A good performance  can be expected from RQMC in high dimensions if  dimension reduction methods and smoothing methods are well utilized. We hope that the established theoretical results could be helpful for guiding a good way to improve plain RQMC.

\section*{Acknowledgments}

This work was supported by the National Science Foundation of China (No. 12071154) and the Fundamental Research Funds for the Central Universities (No. 2019MS106).
\section{Appendix}

\begin{lemma}\label{lem:append}
Let $L_i = g_\theta(\tilde{\bm u}_i)$, where $\{\tilde{\bm u}_1,\dots,\tilde{\bm u}_n\}$ is a scrambled $(t,m,d)$-net in base $b\ge 2$ with $n=b^m$.	If Assumption~\ref{assum:crv} is satisfied, there are at most $b^t$ of the observations $L_1,\dots,L_n$ with equal value w.p.1.
\end{lemma}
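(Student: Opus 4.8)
The plan is to separate two distinct sources of coincidences among $L_1,\dots,L_n$: repeated net points, which force exactly equal losses, and distinct net points that happen to map to the same value, which I will show occurs with probability zero. First I would record a multiplicity bound for the points themselves. Since a scrambled $(t,m,d)$-net is again a $(t,m,d)$-net w.p.1, fix a realization on which this holds. For any value $\bm p$ attained among $\tilde{\bm u}_1,\dots,\tilde{\bm u}_n$, choose the elementary interval $[c/b^{m-t},(c+1)/b^{m-t})\times[0,1)^{d-1}$ of volume $b^{t-m}$ containing $\bm p$ (possible since $t\le m$); by Definition~\ref{defnnets} it holds exactly $b^t$ points, and every copy of $\bm p$ lies inside it. Hence each value is attained by at most $b^t$ of the $\tilde{\bm u}_i$, so any set of indices sharing a common point value has size at most $b^t$.

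Next I would treat distinct points. Group the indices by the value of $\tilde{\bm u}_i$; within a group the losses agree trivially, so it remains to show that for $i\ne j$ with $\tilde{\bm u}_i\ne\tilde{\bm u}_j$ one has $\pr{L_i=L_j}=0$. The points differ in some coordinate $j_0$, and since scrambling maps equal originals to equal values, the original base-$b$ digit strings of $u_i^{j_0}$ and $u_j^{j_0}$ differ; let $\ell$ be the first differing digit. Using the nested structure \eqref{eq:scrambling}, the scrambled coordinates $\tilde u_i^{j_0}$ and $\tilde u_j^{j_0}$ share their first $\ell-1$ digits, split into different sub-cells at level $\ell$, and—because the digit histories diverge at level $\ell$—their deeper digits are generated by mutually independent permutations; distinct coordinates are scrambled independently as well.

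The key step is then a conditioning argument. Conditioning on all coordinates of $\tilde{\bm u}_i$ other than $j_0$, on the entire point $\tilde{\bm u}_j$, and on the level-$\ell$ cell of $\tilde u_i^{j_0}$, the independence just described makes $\tilde u_i^{j_0}$ conditionally uniform on that cell, an open subinterval of $(0,1)$, independently of every conditioned quantity. Thus $L_i=g_\theta(\tilde{\bm u}_i)$ is, conditionally, the value of $g_\theta$ with its other $d-1$ arguments frozen and the $j_0$th argument uniform on an open interval, so Assumption~\ref{assum:crv} makes it a continuous random variable; it therefore equals the conditionally fixed number $L_j$ with conditional probability zero, and integrating gives $\pr{L_i=L_j}=0$.

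Finally I would assemble the pieces: there are only finitely many pairs, so w.p.1 no two indices from different point-value groups share a loss value. Consequently every maximal set of indices with a common $L$ value lies within a single group, which contains at most $b^t$ indices by the multiplicity bound, proving the claim. I expect the main obstacle to be making the conditional-uniformity step fully rigorous—one must track precisely which permutations in \eqref{eq:scrambling} are shared between the two points and which are independent, and verify that freezing the remaining coordinates and the coarse cell still leaves $\tilde u_i^{j_0}$ genuinely uniform on an interval, so that Assumption~\ref{assum:crv} applies verbatim.
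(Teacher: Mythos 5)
Your proposal is correct and follows essentially the same route as the paper's proof: the net property applied to an elementary interval of volume $b^{t-m}$ bounds the multiplicity of any point value by $b^t$, and for distinct points the divergence of digit histories at the first differing digit makes the deeper scrambled digits conditionally i.i.d.\ uniform, so that Assumption~\ref{assum:crv} yields $\mbp(L_i = L_j) = 0$ pairwise, exactly as in the paper (which justifies the conditional uniformity by the proof of Proposition~2 of \cite{owen:1995}). The only cosmetic differences are that you condition on the entire point $\tilde{\bm u}_j$ and exploit conditional continuity of $L_i$ alone, whereas the paper conditions symmetrically and uses conditional independence of two continuous random variables, and that you apply the net property to the scrambled points (a net w.p.1) while the paper applies it to the originals---both equivalent since scrambling preserves equality and distinctness of points.
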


\begin{proof}
Suppose that $\tilde{\bm u}_1,\dots,\tilde{\bm u}_n$ are obtained by scrambling  $\bm u_1,\dots,\bm u_n$ as described in \eqref{eq:scrambling}.
Assume that $\bm u_i\neq \bm u_{i'}$ with unequal components on the $j$th coordinate, i.e., $u_i^j\neq u_{i'}^j$. We write  $u_i^j = \sum_{k=1}^{\infty} a_{ijk}b^{-k}$ and $u_{i'}^j = \sum_{k=1}^{\infty} a_{i'jk}b^{-k}$ in the $b$-adic expansion, where $a_{ijk},a_{i'jk}\in \{0,1,\dots,b-1\}$. Let $s=\min\{k\ge 1: a_{ijk}\neq  a_{i'jk}\}$. For all $k<s$, $a_{ijk}=  a_{i'jk}$, but $a_{ijs}\neq  a_{i'js}$. Denote $\tilde{a}_{ijk}$ and $\tilde{a}_{i'jk}$ as  random permutations of $a_{ijk}$ and $a_{i'jk}$, respectively. According to the scrambling procedure \eqref{eq:scrambling},  for all $k\ge s+1$, $\tilde{a}_{ijk}$ and  $\tilde{a}_{i'jk}$ are independent because different permutations are applied. Recall that all  permutations  are independent.  This implies that conditional on $\tilde{a}_{ijk},\tilde{a}_{i'jk}$ for all $k\le s$, $\tilde{u}_i^j\sim \Unif{(\sum_{k=1}^{s} \tilde{a}_{ijk}b^{-k},1)}$ and $\tilde{u}_{i'}^j\sim \Unif{(\sum_{k=1}^{s} \tilde{a}_{i'jk}b^{-k},1)}$ independently. This uniformity property can be proved as in the proof of \citep[Proposition 2]{owen:1995}, which showed  each $\tilde{\bm u}_i\sim \Unif{[0,1)^d}$.   By Assumption~\ref{assum:crv}, conditional on all $\tilde u_i^\ell,\tilde u_{i'}^\ell$ with $\ell\ne j$ and $\tilde{a}_{ijk},\tilde{a}_{i'jk}$ for $k\le s$ (denote these information as $\mathcal{F}$), $L_i$ and $L_{i'}$ are independent continuous random variables, implying  $\mbp(L_i=L_{i'}|\mathcal{F})=0$. By the law of total expectation, we have
$$\mbp(L_i=L_{i'})=\mbe[\mbp(L_i=L_{i'}|\mathcal{F})]=0.$$

By the definition of $(t,m,d)$-net, there are at most $b^t$ points of $\bm u_1,\dots,\bm u_n$ with equal value. For any two distinct points, the associated random observations of $L$ are different w.p.1. Consequently, there are at most $b^t$ of the observations $L_1,\dots,L_n$ with equal value w.p.1.
\end{proof}

\begin{remark}\label{rem:sobol}
The constant $b^t$ may be conservative for some cases of $t> 0$. For example, a Sobol' sequence is a $(t,d)$-sequence in base $b=2$. The value of $t$ depends on $d$ and is larger than 0 for moderately large $d$; see \cite{dick:2008} for detailed discussion. Direction numbers for generating Sobol' sequences that satisfy the so-called Property A in up to 1111 dimensions have  been given in \cite{joe_constructing_2008}. Property A was introduced by \cite{sobol1976uniformly}. If the unit cube $[0,1)^d$ is divided by the planes $x_j = 1/2$ into $2^d$ equally-sized subcubes, then a sequence of
points in $[0, 1)^d$ satisfies Property A if, after dividing the sequence into
consecutive blocks of $2^d$ points, each one of the points in any block belongs to a
different subcube. This property guarantees that all the points are distinct. For this case, from the proof of Lemma~\ref{lem:append}, the constant $b^t$ can be replaced by $1$.
\end{remark}

\end{document}